\title{On the ergodic theory of Tanaka--Ito type $\alpha$-continued fractions}
\author{Hitoshi Nakada}
\address{Department of Mathematics, Keio University, Yokohama, Japan}
\email{nakada@math.keio.ac.jp}
\author{Wolfgang Steiner}
\address{Universit\'e de Paris, IRIF, CNRS, F--75013 Paris, France}
\email{steiner@irif.fr}
\thanks{This work was supported by the Agence Nationale de la Recherche, project CODYS (ANR-18-CE40-0007).}
\subjclass[2010]{11K50, 11J70}
\newtheorem{thm}{Theorem}
\newtheorem{lem}{Lemma}
\newtheorem{prop}{Proposition}
\newtheorem{cor}{Corollary}
\theoremstyle{remark}
\newtheorem*{rem}{Remark}
\newcommand{\confrac}[2]{
\frac{\displaystyle{
\strut\hfill{#1}\hfill\;\vrule}}
{\displaystyle{
 \strut\vrule\;\hfill{#2}\hfill}}}
\begin{document}
\begin{abstract}
We show the ergodicity of Tanaka--Ito type $\alpha$-continued fraction maps and construct their natural extensions.
We also discuss the relation between entropy and the size of the natural extension domain.
\end{abstract}
\maketitle
\section{Introduction and main results} 
In 1981, two types of $\alpha$-continued fraction maps were defined by \cite{Na-1, T-I}: 
For $\alpha \in [0,1]$,   
\begin{itemize}
\item
the first author considered in \cite{Na-1} the map
\begin{equation} \label{HN}
T_{\alpha}(x) = \left| \frac{1}{x} \right| -  
\left\lfloor \left| \frac{1}{x} \right| +1 - \alpha 
\right\rfloor,
\end{equation}
\item
S. Tanaka and S. Ito \cite{T-I} studied
\begin{equation} \label{T-I}
T_{\alpha}(x) =  \frac{1}{x}  -  
\left\lfloor \frac{1}{x} +1 - \alpha \right\rfloor,
\end{equation} 
\end{itemize}
where $0 \ne x \in [\alpha -1, \alpha)$ and $T_{\alpha}(0) = 0$.

The main aim of these papers was the derivation of the density functions of the absolutely continuous invariant measure by constructing the natural extension of a 1-dimensional continued fraction map as a planar map. 
For the map \eqref{T-I}, this was successful only for $\frac{1}{2} \le \alpha \le \frac{\sqrt{5}-1}{2}$, though for $\frac{1}{2}\le \alpha \le 1$ in case~\eqref{HN}.
In \cite{K-S-S}, this was extended to all $\alpha \in (0,1]$ in case~\eqref{HN}.
Here, we show that this method also works for $\alpha>\frac{\sqrt{5}-1}{2}$ in case~\eqref{T-I}. 
In the sequel, the map $T_{\alpha}$ denotes the second type in the above, except where specified otherwise.
Then $T_{\alpha}$ is symmetric w.r.t.\ $\frac{1}{2}$.
Therefore, we can assume that $\frac{1}{2} \le \alpha \le 1$, and it is easy to extend our results to $0\le \alpha \le \frac{1}{2}$.  
Since there were no proofs of the existence of the absolutely continuous invariant measure for $\alpha>\frac{\sqrt{5}-1}{2}$ and for the ergodicity w.r.t.\ this measure for $\alpha > \frac{1}{2}$, we give these proofs for all  $\alpha$ in $[\frac{1}{2},1]$.  

In \S2, we give some basic properties of~$T_{\alpha}$, in particular that the set of full cylinders generates the Borel algebra (Proposition~\ref{p:full}).  
In \S3, we show the existence of the absolutely continuous invariant probability measure $\mu_{\alpha}$ for $T_{\alpha}$ by  the classical method based on Proposition~\ref{p:full}. 
Then we show its ergodicity. 

\begin{thm} \label{t:ergodic}
There is an ergodic invariant probability measure $\mu_\alpha$ for the dynamical system $([\alpha-1,\alpha),T_{\alpha})$ which is equivalent to the Lebesgue measure.  
\end{thm}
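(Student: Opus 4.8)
The plan is to run the classical argument for piecewise-Möbius expanding interval maps, splitting the statement into existence of the invariant density and ergodicity. Normalise Lebesgue measure on $I_\alpha:=[\alpha-1,\alpha)$ so that $\lambda(I_\alpha)=1$, and call a maximal interval $\Delta$ on which $T_\alpha^n$ is a homeomorphism with $T_\alpha^n(\Delta)=I_\alpha$ a \emph{full cylinder of rank $n$}; write $\psi_\Delta:=(T_\alpha^n|_\Delta)^{-1}:I_\alpha\to\Delta$ for the corresponding inverse branch, which is a Möbius map. Everything rests on a uniform distortion bound
\[ D:=\sup_\Delta\ \frac{\sup_{y\in I_\alpha}|\psi_\Delta'(y)|}{\inf_{y\in I_\alpha}|\psi_\Delta'(y)|}<\infty, \]
the supremum being over all full cylinders of all ranks. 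This is where the work lies. On $I_\alpha$ one has $|T_\alpha'(x)|=x^{-2}$ and $|T_\alpha''(x)|/T_\alpha'(x)^2=2|x|$, so $|T_\alpha''|/T_\alpha'^2\le 2\alpha$ and, for $\alpha<1$, $|x|\le\alpha<1$ forces $|T_\alpha'|\ge\alpha^{-2}>1$, i.e.\ $T_\alpha$ is uniformly expanding; feeding these two facts into the usual telescoping estimate along the backward orbit through a full cylinder yields a finite $D$ depending only on $\alpha$, \emph{provided} full cylinders of every rank actually occur and the estimate is genuinely uniform near the endpoints $\alpha-1,\alpha$. Verifying this structural point for $\alpha>\tfrac{\sqrt5-1}{2}$ is precisely the content of \S2--\S3 and of Proposition~\ref{p:full}; it is the real obstacle, the rest being routine. (For $\alpha=1$, $T_1$ is the Gauss map and the theorem is classical.)

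Granting the distortion bound, existence of $\mu_\alpha$ is the classical method already announced in the introduction: by Proposition~\ref{p:full} the full cylinders generate the Borel $\sigma$-algebra, and $D<\infty$ makes the Cesàro averages $\frac1N\sum_{n=0}^{N-1}\mathcal L^n\mathbf 1$ of the Perron--Frobenius operator $\mathcal L$ of $T_\alpha$ carry uniformly comparable mass on each full cylinder; any weak-$*$ limit point is therefore a $T_\alpha$-invariant probability measure $\mu_\alpha=h_\alpha\,d\lambda$ with $h_\alpha$ bounded above and below away from $0$ and $\infty$, so $\mu_\alpha$ is equivalent to $\lambda$.

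It remains to prove ergodicity of $(I_\alpha,T_\alpha,\mu_\alpha)$, and by the equivalence of $\mu_\alpha$ and $\lambda$ it suffices to prove it for $\lambda$. Let $A\subseteq I_\alpha$ satisfy $T_\alpha^{-1}A=A$ up to a $\lambda$-null set, with $\lambda(A)>0$; we show $\lambda(A)=1$. For a full cylinder $\Delta$ of rank $n$, the map $T_\alpha^n$ carries $\Delta$ bijectively onto $I_\alpha$ and $T_\alpha^{-n}A=A$, hence $\psi_\Delta(A)=\Delta\cap A$ modulo $\lambda$, and therefore
\[ \frac{\lambda(\Delta\cap A)}{\lambda(\Delta)}=\frac{\int_A|\psi_\Delta'|\,d\lambda}{\int_{I_\alpha}|\psi_\Delta'|\,d\lambda}\ \ge\ \frac{1}{D}\,\lambda(A)\ >\ 0, \]
with the lower bound independent of $\Delta$. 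Since this holds for every full cylinder and the full cylinders generate the Borel $\sigma$-algebra, Knopp's lemma gives $\lambda(A)=1$. (Equivalently: if $\lambda(A^{c})>0$, choose a Lebesgue density point $x_0$ of $A^{c}$; by Proposition~\ref{p:full} it lies in full cylinders of arbitrarily large rank, whose lengths tend to $0$ by expansivity, and then $\lambda(\Delta\cap A^{c})/\lambda(\Delta)\le 1-\lambda(A)/D<1$ contradicts the density-point property.) This establishes Theorem~\ref{t:ergodic}.
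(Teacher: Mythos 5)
Your overall architecture (a distortion constant $D$ on full cylinders, Proposition~\ref{p:full}, and a Knopp/density-point argument) is sound, and your ergodicity step is essentially the paper's: Proposition~\ref{ergodicity} runs the same argument, only for the jump transformation $\overset{\circ}{T}_\alpha$ rather than for $T_\alpha$ itself, so working directly with $T_\alpha$ as you do is a legitimate (and slightly cleaner) variant. Your route to $D$ is genuinely different: you invoke the Adler condition $|T_\alpha''|/(T_\alpha')^2\le 2\alpha$ together with $\inf|T_\alpha'|\ge\alpha^{-2}>1$, whereas the paper writes $|\psi'_{a_1,\ldots,a_n}(y)|=(q_{n-1}y+q_n)^{-2}$ and controls $q_{n-1}/q_n$ through the admissible digit strings for $\alpha>g$ (Lemma~\ref{q-n-2}, Proposition~\ref{bdd-dis}). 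Your version does work for each fixed $\alpha<1$ — telescoping $\frac{d}{dy}\log|\psi'(y)|=-\sum_k\frac{T_\alpha''(\psi_k(y))}{T_\alpha'(\psi_k(y))}\psi_k'(y)$ gives $\log D\le 2\alpha/(1-\alpha^2)$, a bound that degenerates as $\alpha\to1$ but is uniform in the rank — yet you only gesture at ``the usual telescoping estimate'' and then defer ``the real obstacle'' back to \S2--\S3. Since this estimate is the one quantitative input everything rests on, and since it is the place where your argument diverges from the paper's, it should be carried out rather than asserted.

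The genuine gap is in the existence step. From $D<\infty$ you conclude that the Ces\`aro averages of $\mathcal L^n\mathbf 1$ are bounded above and below, hence that a weak-$*$ limit is an invariant density bounded away from $0$ and $\infty$. But $\mathcal L^n\mathbf 1(y)=\sum_\Delta|\psi'_\Delta(y)|$ runs over \emph{all} rank-$n$ cylinders whose image contains $y$, not only the full ones. For a non-full $\Delta$ the distortion bound only gives $|\psi'_\Delta(y)|\asymp\lambda(\Delta)/\lambda(T_\alpha^n\Delta)$, and $T_\alpha^n\Delta$ is an interval with endpoints in the orbits of $\alpha-1$ and $\alpha$ that can be arbitrarily short; so neither $\|\mathcal L^n\mathbf 1\|_\infty\le C$ nor the matching lower bound (the full rank-$n$ cylinders need not cover most of $[\alpha-1,\alpha)$) follows from what you have proved. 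The paper sidesteps exactly this difficulty by inducing: the jump transformation $\overset{\circ}{T}_\alpha$ has \emph{only} full branches, so the distortion bound gives $C_1^{-1}\lambda(A)<\lambda(\overset{\circ}{T}_\alpha{}^{-m}A)<C_1\lambda(A)$ and Dunford--Miller yields $\nu\asymp\lambda$; Schweiger's formula \eqref{mu-0} then transports $\nu$ to a $T_\alpha$-invariant measure, finite because $\sum_n\lambda(B_n)<\infty$ by \eqref{non-full-size}, and equivalent to $\lambda$ (which is all Theorem~\ref{t:ergodic} needs — the two-sided density bound is only established later, in Proposition~\ref{p:bound}, and via the natural extension rather than via the transfer operator). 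You need either this inducing step or an independent proof that $\sup_n\|\mathcal L^n\mathbf 1\|_\infty<\infty$; the full-cylinder distortion bound alone does not supply it.
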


Recall that an ergodic measure preserving map $\hat{S}$ is the natural extension of an ergodic measure preserving map $S$ if $\hat{S}$ is invertible and any invertible extension of $S$ is an extension of $\hat{S}$.
We give the natural extension of $T_{\alpha}$ as a planar map
\[
\mathcal{T}_\alpha(x, y) = \bigg(\frac{1}{x} - \bigg\lfloor \frac{1}{x} + 1-\alpha\bigg\rfloor, \frac{1}{y+\big\lfloor \frac{1}{x} + 1-\alpha\big\rfloor}\bigg),
\]
with $\mathcal{T}_\alpha(0,y) = (0,0)$, and the natural extension domain
\[
\Omega_\alpha = \bigcup_{n\ge0} \overline{\mathcal{T}_\alpha^n\big([\alpha{-}1, \alpha) \times \{0\}\big)}.
\]
Then $\frac{\mathrm{d}x\,\mathrm{d}y}{(1 + xy)^{2}}$ gives an absolutely continuous invariant measure $\hat{\mu}$ of $(\Omega_{\alpha}, \mathcal{T}_\alpha)$, and we denote by $\hat{\mu}_\alpha$ the corresponding probability measure.
The main problem here is to show that $\Omega_{\alpha}$ has positive Lebesgue measure. 
We show the following theorem, where the density function of $\mu_\alpha$ is given by    
\[
\frac{1}{\hat{\mu}(\Omega_{\alpha})} \int_{y :\, (x, y) \in \Omega_{\alpha}} 
\frac{1}{(1 + xy)^{2}}\, \mathrm{d}y.
\]

\begin{thm} \label{t:natext}
For $\alpha \in (g,1]$, $\Omega_{\alpha}$ has positive Lebesgue measure and thus $\left( \Omega_{\alpha}, \mathcal T_{\alpha}, \hat{\mu}_\alpha\right)$ is a natural  extension of $([\alpha-1,\alpha), T_{\alpha}, \mu_\alpha)$. 
\end{thm}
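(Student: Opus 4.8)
The statement splits into a formal part---invariance of the density, finiteness of $\hat\mu(\Omega_\alpha)$, and the natural-extension property---and a combinatorial part, the explicit determination of $\Omega_\alpha$; only the latter is delicate, and it is there that $\alpha>g$ enters. A one-line computation shows that each branch $(x,y)\mapsto(x',y'):=\big(\frac1x-a,\frac1{y+a}\big)$ of $\mathcal T_\alpha$ has Jacobian of absolute value $\frac1{x^2(y+a)^2}$ and satisfies $1+x'y'=\frac{1+xy}{x(y+a)}$, so the change of variables formula gives invariance of $\frac{\mathrm{d}x\,\mathrm{d}y}{(1+xy)^2}$ under each branch, hence of $\hat\mu$ under $\mathcal T_\alpha$ (granting, from the combinatorial part, that the branches tile $\Omega_\alpha$ modulo null sets). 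For $\alpha\in(g,1]$ one has $\lfloor\frac1\alpha+1-\alpha\rfloor=1$ and, on the negative side, $\lfloor\frac1{\alpha-1}+1-\alpha\rfloor\le-3$; these digit bounds are used to see that $\Omega_\alpha$ is bounded and bounded away from $\{xy=-1\}$ (already $x>\alpha-1>-1$), so $\hat\mu(\Omega_\alpha)<\infty$ and we normalise to $\hat\mu_\alpha$. The first-coordinate projection $\pi$ conjugates $\mathcal T_\alpha$ to $T_\alpha$ and pushes $\hat\mu_\alpha$ forward to a $T_\alpha$-invariant probability measure equivalent to Lebesgue whose density is obtained by integrating out $y$, i.e.\ the displayed formula; by uniqueness of the absolutely continuous invariant probability measure---a consequence of Theorem~\ref{t:ergodic}---this measure is $\mu_\alpha$. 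Finally, once $\mathcal T_\alpha$ is an invertible measure preserving transformation of $\Omega_\alpha$, Proposition~\ref{p:full} (full cylinders generate the Borel $\sigma$-algebra of $[\alpha-1,\alpha)$) gives $\bigvee_{n\ge0}\mathcal T_\alpha^{\,n}(\pi^{-1}\mathcal B)=\mathcal B_{\Omega_\alpha}$, the defining property of the natural extension, and ergodicity of $\hat\mu_\alpha$ follows from that of $\mu_\alpha$.

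\textbf{The combinatorial part.} Call a finite word $w=(a_1,\dots,a_n)$ over the digit alphabet \emph{full} if $T_\alpha^{\,n}$ maps its cylinder $\Delta(w)$ onto $[\alpha-1,\alpha)$, write $[0;b_1,\dots,b_m]$ for the continued fraction $1/(b_1+1/(b_2+\cdots+1/b_m))$, and let $\tilde w=(a_n,\dots,a_1)$ be the reversal of $w$. Tracking the $\mathcal T_\alpha$-orbit of $(x,0)$ with $x\in\Delta(w)$ gives, for full $w$,
\[
\mathcal T_\alpha^{\,n}\big(\Delta(w)\times\{0\}\big)=[\alpha-1,\alpha)\times\{[0;\tilde w]\},
\]
so $\Omega_\alpha$ contains the horizontal segment $[\alpha-1,\alpha)\times\{[0;\tilde w]\}$ for every full $w$, and $|\Omega_\alpha|>0$ amounts to the values $[0;\tilde w]$ ($w$ full) accumulating on a set of positive measure. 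This is where the difficulty lies. The full words form a semigroup under concatenation (containing the empty word), so every word all of whose letters are \emph{full digits} (those digits $a$ for which the word $(a)$ is full) is itself full; but using only full digits one never produces the partial quotient $1$, so $\overline{\{[0;\tilde w]:w\ \text{a word of full digits}\}}$ is a Lebesgue-null self-similar set. The positive measure must therefore come from full words in which the non-full digit $1$, and its negative analogue, occur in the right context, and to control those we analyse the orbits $\big(T_\alpha^{\,k}(\alpha)\big)_{k\ge0}$ and $\big(T_\alpha^{\,k}(\alpha-1)\big)_{k\ge0}$. For $\alpha>g$ these satisfy a \emph{matching} relation---coinciding up to a shift, or both reaching $0$, after finitely many steps---so only finitely many intervals occur among the images $T_\alpha^{\,k}(\Delta(\text{prefix}))$; hence the full words form the language of a finite automaton, and one reads off $\Omega_\alpha$ as an explicit finite union of nondegenerate rectangles $\bigcup_i J_i\times K_i$, where the $J_i$ are the atoms of the partition of $[\alpha-1,\alpha)$ cut out by the finite set $\{0\}\cup\{T_\alpha^{\,k}(\alpha)\}_{k\ge0}\cup\{T_\alpha^{\,k}(\alpha-1)\}_{k\ge0}$ and each $K_i$ is the finite union of intervals prescribed by the corresponding reversed continued fractions. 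In particular $|\Omega_\alpha|>0$, and a finite check that the branches of $\mathcal T_\alpha$ carry $\bigcup_i J_i\times K_i$ bijectively onto itself modulo null sets supplies the hypothesis left open above.

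\textbf{The main obstacle.} Everything hard is concentrated in the matching, i.e.\ the finiteness of the two endpoint orbits, for \emph{every} $\alpha\in(g,1]$; I expect this to require partitioning $(g,1]$ into finitely many sub-intervals and following the orbits case by case. The role of $\alpha>g$ is precisely to make the relevant first return finite: for $\alpha\le1$ it is equivalent to $\lfloor\frac1\alpha+1-\alpha\rfloor=1$, so that $T_\alpha(\alpha^-)=\frac1\alpha-1\in[0,g)$ and the orbit of $\alpha$ enters a controlled region; for $\alpha\le g$ this mechanism fails and $\Omega_\alpha$ can have empty interior.
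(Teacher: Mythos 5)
Your ``formal part'' is fine and is essentially what the paper does (it simply cites \cite[Theorem~1]{K-S-S} for the fact that positive Lebesgue measure of $\Omega_\alpha$ yields the natural extension, with invariance of $\frac{\mathrm{d}x\,\mathrm{d}y}{(1+xy)^2}$ checked branchwise as in \cite{Na-1,T-I}). The problem is that the entire content of the theorem --- $\lambda(\Omega_\alpha)>0$ --- is reduced in your combinatorial part to an unproven claim: that for \emph{every} $\alpha\in(g,1]$ the orbits of $\alpha$ and $\alpha-1$ under $T_\alpha$ satisfy a matching relation, so that the full words form a regular language and $\Omega_\alpha$ is an explicit finite union of rectangles. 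You state this, call it ``the main obstacle,'' and offer only the expectation that a finite case-by-case partition of $(g,1]$ will settle it. That is a genuine gap, not a deferred routine verification: matching for Tanaka--Ito $\alpha$-continued fractions is the subject of the separate paper \cite{C-L-S} cited here, the matching intervals form a countable (not finite) family with unbounded matching index, and the exact shape of $\Omega_\alpha$ genuinely varies with $\alpha$; so a uniform finite case analysis over subintervals of $(g,1]$ is not available. Your closing assertion that for $\alpha\le g$ ``$\Omega_\alpha$ can have empty interior'' is also false --- Tanaka and Ito determined $\Omega_\alpha$ explicitly for $\alpha\in[\frac12,g]$, e.g.\ $\Omega_g$ is the union of two rectangles in~\eqref{e:Omegag}; the restriction to $\alpha>g$ in the theorem reflects only what was previously unknown.

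The paper's route avoids matching entirely, and you should compare it with yours. It first traps $\Omega_\alpha$ inside an explicit polygon $X_\alpha$ by checking $\mathcal{T}_\alpha(X_\alpha)\subset X_\alpha$ (Lemma~\ref{l:Xalpha}). It then compares expansions for two parameters $g\le\alpha<\beta\le1$: Lemma~\ref{l:alphabeta} shows that the relations $z-x\in\{0,1\}$, $x+z\in\{0,1\}$, $(x\pm1)(z\pm1)=1$ are essentially preserved under one step of $T_\alpha$ and finitely many steps of $T_\beta$, which at the level of the planar maps says that for each $(x,y)\in\Omega_\alpha$ with $y<1-\frac{1}{\sqrt2}$ at least one of the four points of $S(x,y)$ lies in $\Omega_\beta$ (Lemma~\ref{l:alphabeta4}). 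Applying this with $\alpha=g$, where $\Omega_g$ is known explicitly, and sorting out which of the four symmetric images is forced into $\Omega_\beta$ using the a priori bound $X_\beta$, one obtains the explicit rectangle~\eqref{e:rectangle} inside $\Omega_\alpha$ for every $\alpha\in(g,1]$, whence positive measure --- uniformly in $\alpha$ and without ever determining $\Omega_\alpha$ exactly. If you want to salvage your approach you would have to actually prove the matching statement for all $\alpha\in(g,1]$, which is substantially harder than the theorem you are trying to prove.
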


We note that the existence of $\mu_{\alpha}$ follows directly from the result in \S4 but we need the ergodicity proved in Theorem~\ref{t:ergodic} for the concept of a natural extension.

In \S5, we give a selfcontained proof that Rokhlin's formula 
\[
h(T_{\alpha}) = \int_{[\alpha-1, \alpha)} - 2 \log |x|\,  
\mathrm{d}\mu_\alpha
\] 
holds for~$T_{\alpha}$ (Proposition~\ref{p:rokhlin}); we refer to \cite{Zwei} for the general case of one dimensional maps. 
In this paper, we use Propositions~\ref{p:full} and~\ref{p:bound} with the Shannon--McMillan--Breiman--Chung theorem; see \cite{Br, Ch}. 
Moreover, we show that 
\[
-2 \lim_{n\to\infty} \frac{1}{n} \log |q_{\alpha,n}(x)|  = h(T_\alpha)
\]
for almost all $x \in [\alpha-1,\alpha)$, where $q_{\alpha,n}(x)$ is the denominator of the $n$-th convergent of $x$ given by~$T_\alpha$; note that Tanaka and Ito~\cite{T-I} mentioned this fact for $\alpha = 1/2$.

The behavior of the entropy as a function of $\alpha$ will be discussed in the forthcoming paper~\cite{C-L-S}. 
In the case of $T_\alpha$ defined by~\eqref{HN}, it was shown in \cite[Theorem~2]{K-S-S} that $h(T_\alpha) \hat{\mu}(\Omega_\alpha) = \pi^2/6$ for all $\alpha \in (0,1]$, where $\hat{\mu}$ is the invariant measure of the natural extension given by $\frac{\mathrm{d}x\,\mathrm{d}y}{(1 + xy)^{2}}$ (without normalization).
For $T_\alpha$ defined by~\eqref{T-I}, this does not hold: for $\alpha = 1$, the maps defined by \eqref{HN} and \eqref{T-I} are equal and we have thus $h(T_1) \hat{\mu}(\Omega_1) = \pi^2/6$ in both cases; for $\alpha = 1/2$, the maps defined by \eqref{HN} and \eqref{T-I} produce the same continued fraction expansions and have thus the same entropy, but $\Omega_{1/2}$ for \eqref{T-I} is equal to $\Omega_{1/2} \cup (-\Omega_{1/2})$ for~\eqref{HN}, hence we have $h(T_{1/2}) \hat{\mu}(\Omega_{1/2}) = \pi^2/3$ in case~\eqref{T-I}.
For case~\eqref{T-I}, we have the following.

\begin{thm} \label{t:hmu}
The function
\[
\alpha \mapsto h(T_\alpha)\, \hat{\mu}(\Omega_\alpha)
\]
is a monotonically decreasing function of $\alpha \in [\frac{1}{2},1]$. 
\end{thm}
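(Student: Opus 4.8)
The plan is to first rewrite the product $h(T_\alpha)\,\hat\mu(\Omega_\alpha)$ as a single integral over $\Omega_\alpha$, and then to track how that integral varies as $\alpha$ increases. For the first part, Rokhlin's formula (Proposition~\ref{p:rokhlin}) gives $h(T_\alpha)=\int_{[\alpha-1,\alpha)}(-2\log|x|)\,\mathrm{d}\mu_\alpha$, and since $\mu_\alpha$ is the image of $\hat\mu_\alpha$ under the projection $(x,y)\mapsto x$ --- which is precisely what makes $(\Omega_\alpha,\mathcal T_\alpha,\hat\mu_\alpha)$ a natural extension of $([\alpha-1,\alpha),T_\alpha,\mu_\alpha)$, Theorem~\ref{t:natext} --- we obtain
\[
h(T_\alpha)\,\hat\mu(\Omega_\alpha)=\int_{\Omega_\alpha}\frac{-2\log|x|}{(1+xy)^2}\,\mathrm{d}x\,\mathrm{d}y=:E(\alpha).
\]
Because $[\alpha-1,\alpha)\subset(-1,1)$ the integrand is nonnegative, and $E(\alpha)<\infty$ since $h(T_\alpha)<\infty$. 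It therefore suffices to show that $E$ is nonincreasing on $[\tfrac12,1]$.

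For the second part I would use the explicit description of $\Omega_\alpha$ obtained in \S4 in the proof of Theorem~\ref{t:natext}. Fix $\tfrac12\le\alpha<\alpha'\le1$. Passing from $\alpha$ to $\alpha'$, the base interval loses the slab $x\in[\alpha-1,\alpha'-1)$, on which $|x|\le1-\alpha\le\tfrac12$ so that $-2\log|x|$ is large (and blows up as $x\to0$), and gains the slab $x\in[\alpha,\alpha')$, on which $|x|\ge\alpha\ge\tfrac12$ so that $-2\log|x|\le2\log2$ is small; over the common range $[\alpha'-1,\alpha)$ the admissible $y$-fibres also change. The heart of the argument is to read off from the \S4 description that, on each parameter interval where the combinatorics of $\Omega_\alpha$ is locally constant, the symmetric difference $\Omega_\alpha\,\triangle\,\Omega_{\alpha'}$ splits into finitely many removed rectangles $A_i\subset\Omega_\alpha$ and added rectangles $B_i\subset\Omega_{\alpha'}$ matched in pairs, with $B_i$ the image of $A_i$ under an iterate $\mathcal T_\alpha^{k_i}$ (all the moving edges of $\Omega_\alpha$ being governed by the forward orbits of the endpoints $\alpha$ and $\alpha-1$, by the defining formula $\Omega_\alpha=\bigcup_n\overline{\mathcal T_\alpha^n([\alpha-1,\alpha)\times\{0\})}$ and the cylinder structure of \S2). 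Since $\mathcal T_\alpha$ preserves $\frac{\mathrm{d}x\,\mathrm{d}y}{(1+xy)^2}$ we get $\hat\mu(A_i)=\hat\mu(B_i)$, and a change of variables turns
\[
E(\alpha')-E(\alpha)=\sum_i\Big(\int_{B_i}(-2\log|x|)\,\mathrm{d}\hat\mu-\int_{A_i}(-2\log|x|)\,\mathrm{d}\hat\mu\Big)=\sum_i\int_{A_i}2\log\frac{|x|}{|T_\alpha^{k_i}x|}\,\mathrm{d}\hat\mu.
\]
It then remains to verify the pointwise inequality $|x|\le|T_\alpha^{k_i}x|$ on each $A_i$: geometrically, this says that as $\alpha$ grows every removed rectangle sits at smaller $|x|$ than the added rectangle it is matched with, which is exactly what one expects from the base interval sliding to the right. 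Granting it, $E(\alpha')\le E(\alpha)$ on each such parameter interval, and global monotonicity follows from the continuity of $E$ in $\alpha$ together with the density of these intervals (or from a direct comparison of $\Omega_\alpha$ and $\Omega_{\alpha'}$ for arbitrary $\alpha<\alpha'$).

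The main obstacle is everything packed into "read off from the \S4 description": producing the matched pairs $A_i,B_i$ and the exponents $k_i$ explicitly, folding the two boundary slabs into the same scheme, and checking $|x|\le|T_\alpha^{k_i}x|$ on $A_i$ --- all of which rests on a careful, and likely case-split, analysis of the orbits $\{T_\alpha^n\alpha\}$ and $\{T_\alpha^n(\alpha-1)\}$, with the usual care at the Lebesgue-null, nowhere-dense set of parameters for which $\Omega_\alpha$ is not a finite union of rectangles (where the continuity of $E$ is invoked). One could also organise the same computation more bluntly: with the \S4 rectangles in hand, $E(\alpha)$ is an explicit combination of logarithms and dilogarithms evaluated at the endpoint-orbit values, and one differentiates in $\alpha$; but the pairing above is what explains why the dilogarithmic terms cancel, leaving a manifestly nonpositive derivative. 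Conceptually, $E(\alpha)$ is the Liouville volume of the part of the unit tangent bundle of the modular surface swept out by the cross-section $\Omega_\alpha$ before its first return --- the return time being $\log|T_\alpha'(x)|=-2\log|x|$ --- so $E(\alpha)$ equals $\tfrac{\pi^2}{6}$ times the covering multiplicity of that cross-section, which is minimal, equal to $1$, exactly at $\alpha=1$; but the elementary route is the one I would carry out to keep the paper self-contained.
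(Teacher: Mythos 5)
Your reduction of the product to the single integral $E(\alpha)=\int_{\Omega_\alpha}\frac{-2\log|x|}{(1+xy)^2}\,\mathrm{d}x\,\mathrm{d}y$ is correct, but the rest of the argument has a genuine gap, and you have correctly located it yourself: everything is ``packed into reading off from the \S4 description''. The paper does not provide an exact description of $\Omega_\alpha$ for $\alpha>g$; it only establishes the sandwich $Y_\alpha\subset\Omega_\alpha\subset X_\alpha$ (Lemmas~\ref{l:Xalpha} and~\ref{l:rectangles}), so there is no finite rectangle decomposition of $\Omega_\alpha\,\triangle\,\Omega_{\alpha'}$ available to read off. More importantly, the correspondence between $\Omega_\alpha$ and $\Omega_\beta$ that \S4 actually establishes (Lemmas~\ref{l:alphabeta3} and~\ref{l:alphabeta4}) is not by iterates of a single map $\mathcal{T}_\alpha$: a point of $\Omega_\alpha$ is related to a point of $\Omega_\beta$ through the four-element symmetry set $S(x,y)$, i.e.\ through $(x,y)\mapsto(-x,-y)$, $(x+1,\frac{y}{1-y})$ or $(1-x,\frac{-y}{y+1})$, combined with \emph{different} numbers of iterations of the two \emph{different} maps $\mathcal{T}_\alpha$ and $\mathcal{T}_\beta$. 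These symmetries preserve $\hat{\mu}$ but do not preserve $-2\log|x|$, so your key pointwise inequality $|x|\le|T_\alpha^{k_i}x|$ on each removed piece is both unsubstantiated and not of the right form; it is asserted on the grounds that it is ``what one expects'', which is exactly the content one would have to prove. The final limiting step also needs continuity of $\alpha\mapsto h(T_\alpha)\hat{\mu}(\Omega_\alpha)$, which is nowhere established in the paper.

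The paper's proof avoids all of this by never comparing the integrals of $-2\log|x|$ directly. It exhibits a small set $X_{\alpha,\beta}\subset\Omega_\alpha$ of positive measure and a single symmetry $\varphi\in\{(x,y)\mapsto(-x,-y),\,(x,y)\mapsto(1-x,\frac{-y}{y+1})\}$ with $\varphi(X_{\alpha,\beta})\subset\Omega_\beta$ and $\hat{\mu}(\varphi(X_{\alpha,\beta}))=\hat{\mu}(X_{\alpha,\beta})$. Lemma~\ref{l:alphabeta3} then shows that the first return map $\widetilde{\mathcal{T}}_\alpha$ of $\mathcal{T}_\alpha$ to $X_{\alpha,\beta}$ satisfies $\varphi\widetilde{\mathcal{T}}_\alpha=\widetilde{\mathcal{T}}_\beta^{\,m}\varphi$ with $m\ge1$ depending on the point, i.e.\ the $\alpha$-return map is (conjugate to) a jump transformation of the $\beta$-return map, whence $h(\widetilde{\mathcal{T}}_\beta)\le h(\widetilde{\mathcal{T}}_\alpha)$; Abramov's formula for the induced maps then converts this into $\hat{\mu}(\Omega_\beta)h(\mathcal{T}_\beta)\le\hat{\mu}(\Omega_\alpha)h(\mathcal{T}_\alpha)$ because the two inducing sets have equal measure. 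This handles arbitrary $g\le\alpha<\beta\le1$ in one stroke, with no case analysis of endpoint orbits, no exact knowledge of $\Omega_\alpha$, and no continuity argument. If you want to salvage your route, the missing inequality would have to be replaced by the observation that the time parametrization (the return time $-2\log|x|$ to the cross-section) is what changes monotonically, which is in effect what the relation $m\ge1$ in the paper's argument encodes.
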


\section{Some definitions and notation}
We start with basic definitions. 
Since we discuss a fixed $\alpha$, we omit $\alpha$ from the index. 
We~define 
\[
a_{k}(x) = \left\lfloor \frac{1}{T_{\alpha}^{k-1}(x)} + 1 -\alpha \right\rfloor , \quad k \ge 1,
\]
when $T_{\alpha}^{k-1}(x) \ne 0$. 
We put $a_{k}(x) = 0$ if $T_{\alpha}^{k-1}(x) = 0$. 
Then we have 
\[
x = \confrac{1}{a_{1}(x)} + \confrac{1}{a_{2}(x)} + 
    \cdots + \confrac{1}{a_{n}(x)} + \cdots,
\]
and the right hand side terminates at some positive integer $n$ if and only if $x$ is a rational number. 
As usual we put 
\begin{equation} \label{q-n}
\begin{pmatrix}
p_{n-1}(x) & p_{n}(x) \\q_{n-1}(x) & q_{n}(x)
\end{pmatrix}
= 
\begin{pmatrix} 
0 & 1 \\ 1 & a_{1}(x)  
\end{pmatrix} 
\begin{pmatrix} 
0 & 1 \\ 1 & a_{2}(x)  
\end{pmatrix} 
\cdots 
\begin{pmatrix} 
0 & 1 \\ 1 & a_{n}(x)  
\end{pmatrix},
\end{equation}
when $a_{n}(x) \ne 0$. 
It is well-known that 
\[
\frac{p_{n}(x)}{q_{n}(x)} = \confrac{1}{a_{1}(x)} + \confrac{1}{a_{2}(x)} + \cdots + \confrac{1}{a_{n}(x)}
\]
and we call $\frac{p_{n}(x)}{q_{n}(x)}$ the $n$-th convergent of the $\alpha$-continued fraction expansion of $x$. 
It is easy to see that $T_{\alpha}^{n}(x)$ is a linear fractional transformation defined by the inverse of \eqref{q-n}, where \eqref{q-n} is the same matrix for all $x$ in the 
same cylinder set of length~$n$.  
Then we see that
\begin{equation} \label{e:rf}
p_n(x) = a_n(x) p_{n-1}(x) + p_{n-2}(x), \qquad q_n(x) = a_n(x) q_{n-1}(x) + q_{n-2}(x), 
\end{equation}
\begin{equation} \label{l-i}
x = \frac{p_{n-1}(x) T_{\alpha}^{n}(x) + p_{n}(x)}{q_{n-1}(x) T_{\alpha}^{n}(x) + q_{n}(x)},
\end{equation}
and
\[
\left| x - \frac{p_{n}(x)}{q_{n}(x)}\right| = 
\left| \frac{T_{\alpha}^n(x)}{q_{n}(x)\cdot (q_{n-1}(x)T_{\alpha}^{n}(x) + q_{n}(x))} \right|;
\]
here we note that the determinants of all matrices in \eqref{q-n} are $\pm 1$. 

In general we use the notation $\begin{pmatrix}p_{n-1} & p_{n} \\q_{n-1} & q_{n}\end{pmatrix}$ without $x$ when $a_{1},\, \ldots,\, a_{n}$ is given without~$x$. 
For a given sequence of non-zero integers, $a_{1}, \, a_{2}, \, \ldots , \, a_{n}$, we denote by $\langle  a_{1}, \, a_{2}, \, \ldots , \, a_{n} \rangle$ the associated cylinder set, i.e.,
\[
\langle  a_{1}, \, a_{2}, \, \ldots , \, a_{n} \rangle = \{ x \in [\alpha -1, \, \alpha): \, a_{1}(x) = a_{1}, \ldots ,a_{n}(x) = a_{n}\} .  
\]
A~sequence $a_{1}, \, a_{2}, \, \ldots , \,a_{n}$ is said to be admissible if the associated cylinder set has an inner point; here we note that any cylinder set is an interval.  
A~cylinder set is said to be full if 
\[
T_{\alpha}^{n}(\langle a_{1}, a_{2}, \ldots , a_{n} \rangle) = [\alpha-1, \alpha) .  
\]
Because of the definition \eqref{q-n} we see that 
\[
\frac{q_{n-1}(x)}{q_{n}(x)} = \confrac{1}{a_{n}(x)} + \confrac{1}{a_{n-1}(x)} + \cdots + \confrac{1}{a_{1}(x)} . 
\]
We set
\[
g = \frac{\sqrt{5}-1}{2}.
\]

\begin{lem}\label{size-lemma}
For any cylinder set $\langle a_{1}, a_{2}, \ldots , a_{n} \rangle$, we have
\[
\lambda(\langle a_{1}, a_{2}, \ldots , a_{n} \rangle) \le g^{-2(n-1)}/2,
\]
where $\lambda$ denotes the Lebesgue measure. 
\end{lem}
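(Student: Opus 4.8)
We may assume $a_1,\ldots,a_n$ is admissible, as otherwise $\langle a_1,\ldots,a_n\rangle$ is a Lebesgue null set and there is nothing to prove. The plan is to track how prepending one digit rescales the Lebesgue measure of a cylinder and then to iterate down to a one-digit cylinder. Writing $\psi_a(t)=\frac1{a+t}$, one has $T_\alpha(\psi_a(t))=t$, so every $x\in\langle a_1,\ldots,a_n\rangle$ satisfies $x=\psi_{a_1}(T_\alpha x)$ with $T_\alpha x\in\langle a_2,\ldots,a_n\rangle$; since $[\alpha-1,\alpha)$ contains no nonzero integer, $\psi_{a_1}$ is a diffeomorphism on a neighbourhood of $[\alpha-1,\alpha)$, hence $\langle a_1,\ldots,a_n\rangle\subseteq\psi_{a_1}(\langle a_2,\ldots,a_n\rangle)$ and
\[
\lambda(\langle a_1,\ldots,a_n\rangle)\ \le\ \int_{\langle a_2,\ldots,a_n\rangle}\frac{\mathrm{d}t}{(a_1+t)^2}.
\]
(This is the one-step form of $\lambda(\langle a_1,\ldots,a_n\rangle)=\int_{T_\alpha^n(\langle a_1,\ldots,a_n\rangle)}\mathrm{d}t/(q_{n-1}t+q_n)^2$, which follows from \eqref{l-i}.)

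Next I would determine which digits occur and bound $a_1+t$ away from $0$. For $x\in[\alpha-1,\alpha)$ with $x>0$ one has $\frac1x>\frac1\alpha$, so $\frac1x+1-\alpha>\frac1\alpha-\alpha+1\ge1$ and $a_1(x)\ge1$; for $x<0$ one has $x\ge\alpha-1\ge-\frac12$, so $\frac1x\le-2$, $\frac1x+1-\alpha\le-1-\alpha<-1$, and $a_1(x)\le-2$. The same holds for every $a_k(x)$ since $T_\alpha^{k-1}(x)\in[\alpha-1,\alpha)$; thus every admissible digit $a$ satisfies $a\ge1$ or $a\le-2$. For $t\in[\alpha-1,\alpha)$ this gives $a+t\ge1+\alpha>1$ if $a\ge2$; $|a+t|\ge|a|-\alpha\ge1$ if $a\le-2$; and for $a=1$, since $\langle1\rangle=\{x\in[\alpha-1,\alpha):\alpha\le\frac1x<1+\alpha\}=(\frac1{1+\alpha},\alpha)$ is nonempty only when $\frac1{1+\alpha}<\alpha$, i.e.\ $\alpha>g$, we have $1+t\ge\alpha>g$. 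Hence $|a_1+t|\ge g$ in all cases, and the displayed inequality yields
\[
\lambda(\langle a_1,\ldots,a_n\rangle)\ \le\ g^{-2}\,\lambda(\langle a_2,\ldots,a_n\rangle).
\]

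Iterating $n-1$ times gives $\lambda(\langle a_1,\ldots,a_n\rangle)\le g^{-2(n-1)}\,\lambda(\langle a_n\rangle)$, so it remains to prove $\lambda(\langle a\rangle)\le\frac12$ for every admissible digit $a$. I would do this by inspecting the (at most three) possible shapes: if $a\ge2$ then $\langle a\rangle\subseteq(\frac1{a+\alpha},\frac1{a-1+\alpha}]$, of length at most $\frac1{(1+\alpha)(2+\alpha)}<\frac12$; if $a\le-2$ then $\langle a\rangle\subseteq(\frac1{a+\alpha},\frac1{a-1+\alpha}]$, of length $\frac1{(|a|-\alpha)(|a|+1-\alpha)}\le\frac12$; and if $a=1$ then $\langle1\rangle=(\frac1{1+\alpha},\alpha)$, of length $\alpha-\frac1{1+\alpha}=\frac{\alpha^2+\alpha-1}{\alpha+1}\le\frac12$, because $2\alpha^2+\alpha-3=(2\alpha+3)(\alpha-1)\le0$ for $\alpha\le1$. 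This completes the proof.

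The only delicate point is the digit $a=1$. There $\psi_1'(t)=1/(1+t)^2$ exceeds $1$ for $t$ near $\alpha-1$ (as $1+(\alpha-1)=\alpha$ may be close to $g<1$), so the crude estimate $\lambda(\langle1\rangle)\le\int_{\alpha-1}^\alpha\mathrm{d}t/(1+t)^2=1/(\alpha(1+\alpha))$ is useless — this quantity is $>\frac12$ for $\alpha<1$. One genuinely has to use that $\langle1\rangle$ is not a full cylinder: the constraint $\frac1x<1+\alpha$ forces $x>\frac1{1+\alpha}$, equivalently $T_\alpha(\langle1\rangle)=(\frac1\alpha-1,\alpha)$ is a proper subinterval of $[\alpha-1,\alpha)$. (Incidentally, for $n\ge2$ the asserted bound is trivial, since $g^{-2(n-1)}/2>1$ whereas every cylinder lies in an interval of length $1$; all the content is in the case $n=1$.) The remaining computations are routine.
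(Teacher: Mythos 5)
Your computations are all correct and they do establish the inequality exactly as printed, but the printed exponent is a sign error, and your own observation that the bound is trivial for $n\ge 2$ should have been a red flag rather than a parenthetical. The lemma is meant to assert $\lambda(\langle a_1,\ldots,a_n\rangle)\le g^{2(n-1)}/2$, a bound that \emph{decays} geometrically in~$n$: this is forced by how it is used immediately afterwards (the estimate $\lambda(B_n)\le(2g^2)^{n-1}/4$ together with ``$\lambda(B)=0$ since $2g^2<1$'' in the proof of Proposition~\ref{p:full}, and the summability $\sum_n\lambda(B_n)<\infty$ invoked in the proof of Theorem~\ref{t:ergodic}, are vacuous for an increasing bound), and it is what the paper's own proof delivers. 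The paper's argument is an expansion estimate combined along the orbit: $|T_\alpha'(x)|=x^{-2}\ge g^{-2}$ whenever $|x|\le g$, and $|(T_\alpha^2)'(x)|=(x\,T_\alpha(x))^{-2}\ge g^{-4}$ whenever $x\ge g$ (there $a_1(x)=1$, so $x\,T_\alpha(x)=1-x\le 1-g=g^2$); hence $|(T_\alpha^{n-1})'|\ge g^{-2(n-1)}$ on the cylinder, and the bound follows from $\lambda(\langle a_1\rangle)\le 1/2$.

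Your method does not reach that statement and cannot be patched to do so, because your key inequality $|a_1+t|\ge g$ controls the distortion in the wrong direction: it shows that prepending a digit multiplies the Lebesgue measure by at most $g^{-2}\approx 2.618$, whereas what is needed is that it multiplies it by at most~$g^{2}$, i.e.\ $|\psi_{a_1}'(t)|\le g^2$, equivalently $|\psi_{a_1}(t)|\le g$, on the actual image $T_\alpha\langle a_1,\ldots,a_n\rangle$. That holds automatically unless $\psi_{a_1}(t)\in(g,\alpha)$, which forces $a_1=1$ and $\alpha>g$; and for the digit $1$ single-step contraction by $g^2$ genuinely fails --- even on the true image $T_\alpha\langle 1\rangle=(\tfrac1\alpha-1,\alpha)$, which you identify in your closing remark, one only gets $|\psi_1'(t)|<\alpha^2$, and $\alpha^2\to 1$ as $\alpha\to 1$. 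The missing idea is precisely the paper's two-step compensation for visits of the orbit to $[g,\alpha)$. The part of the lemma you do prove carefully, namely $\lambda(\langle a\rangle)\le 1/2$ for every admissible digit (which the paper asserts without proof), is correct and is the easy half of the intended statement.
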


\begin{proof}
For $|x| \le g$, we have $|T_\alpha'(x)| = \frac{1}{x^2} \ge \frac{1}{g^2}$. 
For $x \ge g$, we have $\big|(T_\alpha^{2})'(x) \big| = \frac{1}{(x\, T_\alpha(x))^2} \ge 
\frac{1}{g^4}$.
Since the cylinder of length $0$ has measure $1$ and each cylinder of length $1$ has measure at most~$1/2$, this shows the assertion of this lemma. 
\end{proof}

\begin{prop} \label{p:full}
The set of full cylinders generates the Borel algebra of $[\alpha -1, \alpha)$. 
\end{prop}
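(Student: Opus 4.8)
The plan is to show that full cylinders have arbitrarily small diameter and that every point of $[\alpha-1,\alpha)$ lies in a nested sequence of full cylinders (up to a null set); standard arguments then give that they generate the Borel $\sigma$-algebra. Concretely, it suffices to prove: (i) there is a constant $c<1$ such that every cylinder $\langle a_1,\dots,a_n\rangle$ contains a full subcylinder $\langle a_1,\dots,a_n,a_{n+1},\dots,a_{n+k}\rangle$ for some bounded number $k$ of extra digits (or at least, the union of full subcylinders has relative measure bounded below); and (ii) full cylinders shrink to points, which is immediate from Lemma~\ref{size-lemma} since $g^{-2(n-1)}/2\to0$.

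For step~(i), I would look at the right endpoint structure of $T_\alpha^n(\langle a_1,\dots,a_n\rangle)$. Since $T_\alpha$ is (piecewise) a Möbius map with $|(T_\alpha)'|\ge g^{-2}>1$ away from a neighborhood of $0$ and $|(T_\alpha^2)'|\ge g^{-4}$ near $g$ (as used in Lemma~\ref{size-lemma}), the image of any cylinder under $T_\alpha^n$ is an interval, and after finitely many further applications of $T_\alpha$ the image of any subinterval with nonempty interior will eventually cover an interval on which some branch of $T_\alpha$ is onto $[\alpha-1,\alpha)$. The branches of $T_\alpha$ with digit $a$ large (i.e.\ $1/x$ near an integer far from $0$) are full: for $a\ge 2$, say, the branch domain $\{x: a_1(x)=a\}$ maps onto all of $[\alpha-1,\alpha)$ because $1/x$ ranges over a full unit interval $[a-\alpha, a+1-\alpha)$ (one must check the boundary digits $a$ near $\lceil 1-\alpha\rceil$ and near $0^{\pm}$ separately, where the branch may be non-full). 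So the key is: given the image interval $J=T_\alpha^n(\langle a_1,\dots,a_n\rangle)$, iterate $T_\alpha$ until the image contains a point $x$ with $|x|$ small, hence $1/x$ large, hence lands inside a full branch; expansivity guarantees this happens within a number of steps controlled by $|J|$, and since cylinders of a given length have at most finitely many... actually more carefully, since $|J|$ is bounded below only in terms of $n$ is false — instead I use that $T_\alpha^n$ restricted to the cylinder is onto $J$ and $J$ has some definite length, then a bounded number of iterates of an expanding map blows $J$ up to full size.

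The main obstacle, and the place requiring genuine care, is handling the branches near the endpoints $\alpha-1$ and $\alpha$ and near $0$, where $T_\alpha$ is not onto and the "bounded number of extra digits" claim can fail in a naive form; one must verify that even the non-full (cylindrical) boundary branches, when iterated, have images that eventually contain a full branch, using that the orbit of the endpoints under $T_\alpha$ does not get trapped (this is where the hypothesis $1/2\le\alpha\le1$ and the specific combinatorics of the Tanaka--Ito map enter). A clean way to package this: show that for every $x$ with infinite expansion, the expansion $(a_k(x))$ contains a digit $a_k$ that "looks like" the digit of a full branch infinitely often — equivalently, that $T_\alpha^k(x)$ returns infinitely often to a region where the next branch is full — which follows from the fact that, by Lemma~\ref{size-lemma}, cylinders shrink to points so the digit sequence cannot be eventually equal to the (finitely many) forbidden boundary patterns without forcing $x$ to be a boundary point, a measure-zero set. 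Then the full cylinders ending at such a $k$ form a nested family shrinking to $x$, and letting $x$ range over a full-measure set shows the generated $\sigma$-algebra contains all Borel sets.
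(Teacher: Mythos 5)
There is a genuine gap in the step where you must show that almost every point eventually lies in a full cylinder. Your fallback argument asserts that the digit sequence of $x$ ``cannot be eventually equal to the (finitely many) forbidden boundary patterns without forcing $x$ to be a boundary point, a measure-zero set.'' This is not correct: the forbidden patterns are not finitely many, and the exceptional set is not just boundary points. The paper's key observation is that if $\langle a_1,\dots,a_k\rangle$ fails to be full for \emph{every} $k\le n$, then $(a_1,\dots,a_n)$ must be a concatenation of prefixes of the digit strings of the two endpoints $\alpha$ and $\alpha-1$; this gives at most $2^n$ such admissible words of length $n$, an exponentially growing family. The set $B$ of points whose cylinders are never full is therefore a priori an uncountable Cantor-type set, and its nullity is a genuinely quantitative fact: one must combine the count $2^n$ with the contraction estimate of Lemma~\ref{size-lemma} (each cylinder of length $n$ has measure $O(g^{2n})$) and use that $2g^2<1$. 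Without this comparison your argument does not close; with a map whose expansion constant were closer to $1$, the same ``boundary pattern'' heuristic would fail. One then also needs $\lambda\big(\bigcup_n T_\alpha^{-n}(B)\big)=0$ (nonsingularity of $T_\alpha$) to conclude that a.e.\ orbit avoids $B$ at every time, which yields infinitely many full-cylinder depths by concatenating full cylinders.

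Your primary plan (every cylinder acquires a full subcylinder within a \emph{bounded} number of extra digits) is, as you yourself suspect, not salvageable in this naive form: the image interval $J=T_\alpha^n(\langle a_1,\dots,a_n\rangle)$ can be cut by branch boundaries under further iteration, and the surviving pieces can be arbitrarily short, so expansivity alone does not force $J$ to blow up to $[\alpha-1,\alpha)$ in a number of steps depending only on $n$. The correct structural fact is precisely that the only way to keep avoiding fullness is to shadow the orbit of an endpoint, which is what the concatenation-of-prefixes characterization encodes. Your step (ii), that full cylinders shrink to points via Lemma~\ref{size-lemma}, and the final passage from a full-measure set of nested shrinking full cylinders to generation of the Borel algebra, are both fine and agree with the paper.
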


\begin{proof}
Fix $n \ge 1$. If 
\begin{equation} \label{non-full} 
T_{\alpha}^{k}(\langle a_{1}, a_{2}, \ldots , a_{k} \rangle ) \ne [\alpha-1, \alpha) \quad 
\mbox{for all} \ 1 \le k \le n , 
\end{equation}
then $(a_{1}, a_{2}, \ldots , a_{n})$ is a concatenation of sequences of the form
$(a_{1}(\alpha), a_{2}(\alpha), \ldots , a_{j}(\alpha))$ or $(a_{1}(\alpha{-}1), a_{2}(\alpha{-}1), \ldots , a_{j}(\alpha{-}1))$, $1 \le j \le n$.  
This implies that the number of admissible sequences satisfying
\eqref{non-full} is at most~$2^{n}$.  
We put 
\[
B_{n} = \bigcup_{(a_{1}, \ldots, a_{n})\, \text{with} \, \eqref{non-full}}
\langle a_{1}, a_{2}, \ldots , a_{n} \rangle
\quad \mbox{and} \quad  B = \bigcap_{n=1}^{\infty} B_{n}.
\]
From Lemma~\ref{size-lemma}, we have 
\begin{equation} \label{non-full-size}
\lambda(B_{n}) \le (2g^2)^{-n+1}/4,
\end{equation}
and then $\lambda(B) = 0$ since $2 g^2 < 1$. 
Then we see that
\[
\lambda\bigg(\bigcup_{n=1}^{\infty} T_{\alpha}^{-n}(B)\bigg) = 0. 
\]
This implies that for a.e.\ $x \in [\alpha-1, \alpha)$ we have $T_{\alpha}^{n}(x) \notin B$ for all $n \ge 1$, hence there exists a sequence $n_{1}< n_{2}< \cdots$ (depending on~$x$) such that $T_{\alpha}^{n_{k}}(\langle a_{1}(x), a_{2}(x), \ldots, 
a_{n_k}(x)\rangle)$ is a full cylinder for any $k \ge 1$.     
This shows the assertion of this proposition.  
\end{proof}

The following lemma is essential in this paper. 
\begin{lem} \label{q-n-2}
Let $(a_{1}, \ldots, a_{n})$ be an admissible sequence.
If $\alpha \in [\frac{1}{2},g]$, then we have $|q_n| > |q_{n-1}|$.
If $\alpha \in (g,1]$, then we have $-\frac{1}{2} < \frac{q_{n-1}}{q_n} < 2$, with $\frac{q_{n-1}}{q_n} \ge 1$ only if $a_n = 1$.
\end{lem}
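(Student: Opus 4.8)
The plan is to track the ``reversed'' ratio $\beta_n := q_{n-1}/q_n$. By \eqref{e:rf} we have $q_n = q_{n-1}\,(a_n + q_{n-2}/q_{n-1})$, so (with $q_{-1}=0$, $q_0=1$)
\[
\beta_n = \frac{1}{a_n + \beta_{n-1}}, \qquad \beta_0 = 0,
\]
which is just the finite continued fraction for $q_{n-1}/q_n$ recorded above. I would prove both assertions by induction on $n$; that $q_n \neq 0$, so that $\beta_n$ is meaningful, comes out of the induction, since the resulting bounds keep $a_n + \beta_{n-1}$ away from $0$.

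The input I would prepare first is a description of the possible digits and digit transitions, obtained by a short computation with the length-one cylinders $\langle a\rangle$ and the formula $T_\alpha(y) = 1/y - a$ on $\langle a\rangle$. For $\alpha \in [\frac12,g]$ every admissible digit satisfies $|a_n| \geq 2$: for $x \in (0,\alpha)$ one has $1/x + 1 - \alpha > 1/\alpha + 1 - \alpha \geq 2$, and for $x \in [\alpha-1,0)$ one has $1/x + 1 - \alpha \leq 1/(\alpha-1) - (\alpha-1) \leq -\frac32$. For $\alpha \in (g,1]$ the same estimates give instead $1/\alpha + 1 - \alpha \geq 1$ and $1/(\alpha-1) - (\alpha-1) < -\sqrt5$, so every admissible digit satisfies $a_n \geq 1$ or $a_n \leq -3$; in particular the digit $-2$ never occurs when $\alpha > g$. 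Finally, for $\alpha \in (g,1]$ a negative digit is never immediately preceded by a $1$: if $a_{n-1}=1$ then $T_\alpha^{n-1}(x) = 1/T_\alpha^{n-2}(x) - 1 \in T_\alpha(\langle 1\rangle) = \big(1/\alpha - 1,\ \alpha\big) \subseteq (0,\alpha)$, hence $1/T_\alpha^{n-1}(x) + 1 - \alpha > 1/\alpha + 1 - \alpha \geq 1$ and so $a_n \geq 1$.

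For $\alpha \in [\frac12,g]$ the first claim is then immediate from \eqref{e:rf}: assuming $|q_{n-2}| < |q_{n-1}|$ (true for $n=1$), $|q_n| \geq |a_n|\,|q_{n-1}| - |q_{n-2}| > 2|q_{n-1}| - |q_{n-1}| = |q_{n-1}|$. For $\alpha \in (g,1]$ I would run the induction on $\beta_n = 1/(a_n+\beta_{n-1})$ with the reinforced hypothesis ``$-\frac12 < \beta_{n-1} < 2$, and $\beta_{n-1}\geq 1$ only if $a_{n-1}=1$'' (the base case $\beta_0=0$ being trivial), splitting on the value of $a_n$. If $a_n \geq 2$, then $a_n+\beta_{n-1} > \frac32$, hence $0 < \beta_n < \frac23$. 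If $a_n=1$, then $a_n+\beta_{n-1}\in(\frac12,3)$, hence $\frac13 < \beta_n < 2$, and the clause is preserved since $a_n=1$. If $a_n\leq -3$, then the preceding digit is not $1$ (or $n=1$, where $\beta_0 = 0 < 1$), so the inductive clause forces $\beta_{n-1} < 1$, whence $a_n+\beta_{n-1} < -2$ and therefore $-\frac12 < \beta_n < 0$. In every case $-\frac12 < \beta_n < 2$, $\beta_n\geq 1$ forces $a_n=1$, and $a_n+\beta_{n-1}\neq0$, so $q_n\neq0$.

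I expect the only genuine difficulty to be the case $a_n\leq -3$ for $\alpha\in(g,1]$: with only $\beta_{n-1}\in(-\frac12,2)$ available one gets $\beta_n\in(-1,0)$, which does not give $\beta_n>-\frac12$. Tightening this rests precisely on the transition rule that a negative digit cannot follow a $1$, combined with the extra inductive clause linking $\beta_{n-1}\geq 1$ to $a_{n-1}=1$, so that $\beta_{n-1}<1$ in that case. Setting up this admissibility structure for $\alpha > g$ — in particular that $-2$ does not occur and that only positive digits can follow a $1$ — is the real technical content, even though each of the inequalities involved is a routine computation.
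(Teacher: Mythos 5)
Your proposal is correct and follows essentially the same route as the paper: an induction on $n$ via $\frac{q_n}{q_{n-1}} = a_n + \frac{q_{n-2}}{q_{n-1}}$, using the digit constraints ($|a_n|\ge 2$ for $\alpha\le g$; $a_n\ge 1$ or $a_n\le -3$ for $\alpha>g$) together with the key transition rule that a negative digit cannot follow the digit $1$, which is exactly what makes the reinforced clause ``$\frac{q_{n-1}}{q_n}\ge 1$ only if $a_n=1$'' close the induction in the case $a_n\le -3$. The only difference is that you verify the admissibility facts explicitly, which the paper states without proof.
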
 

\begin{proof} 
We proceed by induction on~$n$. 
Since $q_0=1$, $q_1=a_1$, $|a_n| \ge 2$ when $\alpha \in [\frac{1}{2},g]$, $a_n \ge 1$ or $a_n \le -3$ when $\alpha \in (g,1]$,  the statements hold for $n=1$. 
Suppose that they hold for $n-1$ and recall that $\frac{q_n}{q_{n-1}} = a_n + \frac{q_{n-2}}{q_{n-1}}$ by~\eqref{e:rf}.

If $\alpha \in [\frac{1}{2},g]$, then $|a_n| \ge 2$ gives that $|\frac{q_n}{q_{n-1}}| > 1$; see also \cite[Remark 2.1]{T-I}. 

Let now $\alpha \in (g,1]$.
If $a_n < 0$, then we have $a_n \le -3$ and $a_{n-1} \ne 1$, thus $\frac{q_n}{q_{n-1}} < -2$.
If $a_n > 0$, then we have $\frac{q_n}{q_{n-1}} > \frac{3}{2}$ when $a_n \ge 2$, and $\frac{q_n}{q_{n-1}} > \frac{1}{2}$ when $a_n = 1$.
\end{proof}

We now define the jump transformation of~$T_{\alpha}$, which we will use to show the existence of the absolutely continuous invariant measure. 
From Proposition~\ref{p:full}, for a.e.\ $x \in [\alpha - 1, \alpha)$ there exists $n \ge 1$ such that $T_{\alpha}^{n}\langle a_{1}(x), \ldots , a_{n}(x)\rangle = [\alpha-1, \alpha)$.
We denote the minimum of those~$n$ by~$N(x)$. 
If there is no such~$n$, then we put $N(x) = 0$. 
The jump transformation of $T_{\alpha}$ is
\[
\overset{\circ}{T}_{\alpha}:\, [\alpha -1, \alpha) \to [\alpha -1, \alpha),\quad x \mapsto T_{\alpha}^{N(x)}(x).
\]
Note that $y \in \langle a_{1}(x), \ldots , a_{n}(x)\rangle$ means that $a_{j}(y) = a_{j}(x)$ for all $1 \le j \le n$.  
Hence we see that $N(y) = N(x)$. 
Thus there exists a countable partition $\mathcal{J} = \{J_{k}\,:\, k \ge 1\}$ of $[\alpha-1, \alpha)$ such that each $J_{k}$ is a cylinder set of length $N_{k}$ with
$\overset{\circ}{T}_{\alpha}(x) = T_{\alpha}^{N_{k}}(x)$ for $x \in J_{k}$ and $T_{\alpha}^{j} J_k \ne [\alpha-1, \alpha)$, $1 \le j < N_{k}$, $T_{\alpha}^{N_{k}} J_k =[\alpha-1, \alpha)$.  
Obviously, $\overset{\circ}{T}_{\alpha}$ is a piecewise linear fractional map of the form 
\[
\frac{q_{N_{k}}x - p_{N_{k}}}{-q_{N_{k}-1}x + p_{N_{k}-1}}
\]
for $x \in J_{k}$, and it is bijective from $J_{k}$ to $[\alpha-1, \alpha)$. 

\section{Existence of the absolutely continuous 
invariant measure and ergodicity}
We first prove the following. 

\begin{prop} \label{bdd-dis}
For any admissible sequence $a_{1}, \ldots , a_{n}$, 
\[
\frac{1}{9q_{n}^{2}} < \left| \psi_{a_{1}, \ldots , a_{n}}'(y)\right| < \frac{1}{g^4 q_{n}^{2}} 
\]
holds for all $y \in T_{\alpha}^{n} \langle a_{1}, \ldots , a_{n} \rangle$, where $\psi_{a_{1}, \ldots , a_{n}}$ is the local inverse of $T_{\alpha}^{n}$ restricted to $\langle a_{1}, \ldots , a_{n} \rangle$. 
\end{prop}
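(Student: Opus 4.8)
\emph{Overview.} The plan is to write the inverse branch $\psi_{a_{1},\ldots,a_{n}}$ explicitly as a linear fractional transformation, reduce the claim to a two-sided estimate on $q_{n-1}y+q_{n}$, and control this quantity by means of Lemma~\ref{q-n-2}. On the cylinder $\langle a_{1},\ldots,a_{n}\rangle$ the integers $p_{n-1},p_{n},q_{n-1},q_{n}$ are constant, so \eqref{l-i} yields $\psi_{a_{1},\ldots,a_{n}}(y)=\frac{p_{n-1}y+p_{n}}{q_{n-1}y+q_{n}}$ for $y\in T_{\alpha}^{n}\langle a_{1},\ldots,a_{n}\rangle$. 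Differentiating and using $|p_{n-1}q_{n}-p_{n}q_{n-1}|=1$ (the matrix in \eqref{q-n} has determinant $\pm1$), we obtain
\[
\big|\psi_{a_{1},\ldots,a_{n}}'(y)\big|=\frac{1}{(q_{n-1}y+q_{n})^{2}}=\frac{1}{q_{n}^{2}\,(1+y\,q_{n-1}/q_{n})^{2}}.
\]
Since the domain of $\psi_{a_{1},\ldots,a_{n}}$ is contained in $[\alpha-1,\alpha)$, it suffices to prove that $g^{2}<1+y\,q_{n-1}/q_{n}<3$ for every $y$ in that domain.

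\emph{The case $\frac12\le\alpha\le g$.} Here $[\alpha-1,\alpha)\subseteq(-g,g)$ and Lemma~\ref{q-n-2} gives $|q_{n-1}/q_{n}|<1$, so $|y\,q_{n-1}/q_{n}|<g$; since $g^{2}=1-g$ this gives $g^{2}<1-g<1+y\,q_{n-1}/q_{n}<1+g<3$.

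\emph{The case $g<\alpha\le1$.} Here $[\alpha-1,\alpha)\subseteq(-g^{2},1)$; write $c=q_{n-1}/q_{n}$, so that Lemma~\ref{q-n-2} gives $c\in(-\tfrac12,2)$, with $c\ge1$ only when $a_{n}=1$. If $c<1$, then $yc$, viewed as a function on the rectangle $(-g^{2},1)\times(-\tfrac12,1)$, attains its extremal values at the corners, so $yc\in(-\tfrac12,1)$ and thus $\tfrac12<1+yc<2$. If $c\ge1$, then $a_{n}=1$; the relation $a_{n}(x)=1$ forces $\frac{1}{T_{\alpha}^{n-1}(x)}\ge\alpha>0$, hence $T_{\alpha}^{n-1}(x)\in(0,\alpha)$, hence $y=T_{\alpha}^{n}(x)=\frac{1}{T_{\alpha}^{n-1}(x)}-1>\frac{1}{\alpha}-1\ge0$; combined with $0<c<2$ this gives $1<1+yc<1+2\alpha\le3$. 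In all cases $g^{2}<1+yc<3$, as needed.

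\emph{Main obstacle.} The only genuinely delicate point is the subcase $g<\alpha\le1$ with $q_{n-1}/q_{n}$ near its largest possible value $2$: there the crude bound $|y\,q_{n-1}/q_{n}|<2$ does not keep $1+y\,q_{n-1}/q_{n}$ away from $0$. The remedy is the extra information in Lemma~\ref{q-n-2} that such a large ratio forces $a_{n}=1$, together with the observation that the digit $a_{n}=1$ confines the image cylinder $T_{\alpha}^{n}\langle a_{1},\ldots,a_{n}\rangle$ to $(0,\alpha)$, so that the cross term $y\,q_{n-1}/q_{n}$ is positive; all the remaining estimates are elementary.
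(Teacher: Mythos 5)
Your proposal is correct and follows essentially the same route as the paper: write $\psi_{a_1,\ldots,a_n}$ as the linear fractional map from \eqref{l-i}, reduce to bounding $1+y\,q_{n-1}/q_n$, and invoke Lemma~\ref{q-n-2}, splitting into $\alpha\le g$ and $\alpha>g$ with the extra observation that $q_{n-1}/q_n\ge 1$ forces $a_n=1$ and hence $y>0$. The only difference is that you spell out the derivation of $y>0$ in the $a_n=1$ subcase, which the paper merely asserts.
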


\begin{proof}
From \eqref{l-i}, we see that
\[
\psi_{a_{1}, \ldots , a_{n}}(y) = 
\frac{p_{n-1}\, y + p_{n}}{q_{n-1}\, y + q_{n}}
\]
and then 
\[
\left| \psi_{a_{1}, \ldots , a_{n}}'(y) \right| = 
\frac{1}{(q_{n-1}y + q_{n})^{2}} 
\]
for $y \in T_{\alpha}^n\langle a_{1}, \ldots , a_{n}\rangle$. 
If $\alpha \in [1/2,g]$, then we have $y \in [-1/2,g)$ and thus 
\[
g^2 < 1+y \frac{q_{n-1}}{q_n} < 1+g
\]
by Lemma~\ref{q-n-2}.
If $\alpha \in (g,1]$, then we have $y \in (-g^2,1]$, with $y > 0$ if $a_n = 1$, thus 
\[
\frac{1}{2} < 1+y \frac{q_{n-1}}{q_n} < 3. \qedhere
\]
\end{proof}

\begin{prop} 
There exists an invariant probability measure~$\nu$ for $\overset{\circ}{T}_{\alpha}$ that is equivalent to the Lebesgue measure. 
\end{prop}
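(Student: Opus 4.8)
The plan is to construct $\nu$ as an absolutely continuous invariant measure for the jump transformation $\overset{\circ}{T}_{\alpha}$ via the classical Folklore-type argument, exploiting that $\overset{\circ}{T}_{\alpha}$ is a \emph{full-branch} Markov map: by construction each $J_k \in \mathcal J$ is mapped bijectively onto all of $[\alpha-1,\alpha)$ by $\overset{\circ}{T}_{\alpha}|_{J_k} = T_\alpha^{N_k}|_{J_k}$, whose inverse branch is exactly one of the $\psi_{a_1,\ldots,a_{N_k}}$ from Proposition~\ref{bdd-dis}. So the whole inverse-branch structure and the distortion estimates needed are already available.

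First I would set up the transfer (Perron--Frobenius) operator $L$ associated to $\overset{\circ}{T}_\alpha$ acting on densities: for $f \in L^1$,
\[
(Lf)(y) = \sum_{k \ge 1} f\big(\psi_k(y)\big)\, |\psi_k'(y)|,
\]
where $\psi_k = \psi_{a_1,\ldots,a_{N_k}}$ is the inverse branch on $J_k$. Starting from $f_0 \equiv 1$ (the density of normalized Lebesgue measure $\lambda$ on $[\alpha-1,\alpha)$, up to the constant factor $1/|{\alpha-1-\alpha}| = 1$, noting the interval has length $1$), I would consider the Cesàro averages $f_N = \frac1N \sum_{n=0}^{N-1} L^n f_0$. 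The goal is to show these densities are uniformly bounded above and below by positive constants, and then extract a weak-* limit point which is an invariant density; dividing by its integral gives the probability measure $\nu$, and the two-sided bound gives equivalence to $\lambda$.

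The bounded-distortion input is Proposition~\ref{bdd-dis}: for any admissible $a_1,\ldots,a_n$ and any $y,z$ in the range,
\[
\frac{1}{9 q_n^2} < |\psi_{a_1,\ldots,a_n}'(y)| < \frac{1}{g^4 q_n^2},
\]
so in particular $|\psi'(y)| / |\psi'(z)| < 9/g^4$ uniformly over all branches and all $y,z$. Applying this to the branches $\psi_k$ of $\overset{\circ}{T}_\alpha$, one gets for every density $f$ that is itself bounded with ratio bounded, $(Lf)(y)/(Lf)(z) \le (9/g^4)\cdot (\sup f/\inf f)$; more usefully, since $\int_{[\alpha-1,\alpha)} |\psi_k'| = \lambda(J_k)$ and $\sum_k \lambda(J_k) = 1$, the constant-distortion estimate $\sup_y |\psi_k'(y)| \le (9/g^4)\,\lambda(J_k)$ and $\inf_y |\psi_k'(y)| \ge (g^4/9)\,\lambda(J_k)$ give immediately $g^4/9 \le (L\mathbf 1)(y) \le 9/g^4$ for all $y$, and inductively the same two constants bound $(L^n \mathbf 1)(y)$ from above and below for every $n$ (each step the distortion factor is absorbed because we compare against the previous two-sided bound, which a short computation shows is stable: the ratio $\sup(L^n\mathbf 1)/\inf(L^n\mathbf 1)$ stays $\le 9/g^4$). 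Hence the Cesàro averages $f_N$ are uniformly bounded in $L^\infty$, within $[g^4/9,\, 9/g^4]$.

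Then I would invoke weak-* compactness of the ball of radius $9/g^4$ in $L^\infty = (L^1)^*$ to get a subsequence $f_{N_j} \to h$ weak-*; the lower bound $h \ge g^4/9 > 0$ a.e.\ passes to the limit, and a standard telescoping argument ($Lf_N - f_N = \frac1N(L^N\mathbf 1 - \mathbf 1)\to 0$ in $L^1$, using $\|L\|_{L^1\to L^1} = 1$) shows $Lh = h$, i.e.\ $h\,\mathrm d\lambda$ is $\overset{\circ}{T}_\alpha$-invariant. Normalizing, $\nu = \big(\int h\,\mathrm d\lambda\big)^{-1} h\,\mathrm d\lambda$ is the desired invariant probability measure, and $g^4/9 \le h \le 9/g^4$ forces $\nu$ and $\lambda$ to be mutually equivalent. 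I expect the main technical obstacle to be making rigorous the claim that the two-sided bounds on $L^n\mathbf 1$ are uniform in $n$ (rather than degrading by a factor $9/g^4$ at each step); this is handled by carrying along the ratio bound $\sup/\inf \le 9/g^4$ as the actual inductive hypothesis and checking it is preserved by $L$ — a one-line distortion computation — after which the absolute bounds follow from $\int L^n\mathbf 1\,\mathrm d\lambda = 1$.
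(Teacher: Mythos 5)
Your proposal is correct and is essentially the paper's own argument rewritten in transfer-operator language: the paper's key estimate $C_1^{-1}\lambda(A) < \lambda\big(\overset{\circ}{T}_{\alpha}{}^{-m}(A)\big) < C_1\lambda(A)$ is precisely your two-sided bound on $L^m\mathbf{1}$, derived in both cases from the distortion bound of Proposition~\ref{bdd-dis} for the full inverse branches, with the only cosmetic difference being that the paper extracts the invariant measure from the Ces\`aro averages via the Dunford--Miller theorem where you use weak-$*$ compactness plus telescoping. The one step to repair is the induction you flag as the main obstacle: preserving the ratio $\sup/\inf \le 9/g^4$ under one application of $L$ is not a one-line computation (the naive estimate squares the ratio); instead observe that every inverse branch of $\overset{\circ}{T}_{\alpha}^{\,n}$ is itself a single map $\psi_{a_1,\ldots,a_m}$ for an admissible concatenation of full-return words whose cylinders partition $[\alpha-1,\alpha)$, so Proposition~\ref{bdd-dis} applies to these composed branches directly with the same constants and yields $g^4/9 \le L^n\mathbf{1} \le 9/g^4$ for all $n$ with no induction at all.
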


\begin{proof}
For any cylinder set $J$ of length $n$ such that $T_{\alpha}^{n}J = [\alpha-1, \alpha)$, the size of  $J$ is 
\[
\left|\frac{p_{n-1}(\alpha) T_{\alpha}^{n}(\alpha) + p_{n}(\alpha)}
     {q_{n-1}(\alpha) T_{\alpha}^{n}(\alpha) + q_{n}(\alpha)} 
-  \frac{p_{n-1}(\alpha-1) T_{\alpha}^{n}(\alpha-1) + p_{n}(\alpha-1)}
     {q_{n-1}(\alpha-1) T_{\alpha}^{n}(\alpha-1) + q_{n}(\alpha-1)} 
     \right|
\] 
From Lemma~\ref{q-n-2} and Proposition~\ref{bdd-dis},  this is $\sim q_{n}^{2}$ since 
the condition on~$J$ implies that $|a_{n}| \ge 2$.   
Then there exists a constant $C_1 > 1$ such that 
for any measurable set $A \subset [\alpha-1, \alpha)$ 
\[
C_1^{-1} \lambda(A) < 
\lambda\big(\overset{\circ}{T}_{\alpha}{}^{-m}(A)\big) <
C_1 \lambda (A) . 
\]
By the Dunford--Miller theorem we have that
\[
\mu_0(A) = \lim_{M\to \infty} \frac{1}{M} \sum_{m=1}^{M} \lambda\big(\overset{\circ}{T}_{\alpha}{}^{-m}(A)\big)
\]
exists for any measurable subset~$A$. 
It follows from the above estimate that 
\[
C_1^{-1} \lambda(A) \le \mu_{0}(A) \le C_1 \lambda(A),
\]
hence $\mu_0$ is a finite measure which is equivalent to 
Lebesgue measure.
\end{proof}

\begin{prop}  \label{ergodicity}
The map $\overset{\circ}{T}_{\alpha}$ is ergodic w.r.t.\ the Lebesgue measure.
\end{prop}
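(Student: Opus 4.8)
The plan is to run the classical Knopp-type argument for Markov maps with full branches and bounded distortion. By construction each $J_{k}$ is mapped by $\overset{\circ}{T}_{\alpha} = T_{\alpha}^{N_{k}}$ bijectively onto the whole interval $[\alpha-1,\alpha)$; consequently every branch of every iterate $\overset{\circ}{T}_{\alpha}^{m}$ also maps its domain bijectively onto $[\alpha-1,\alpha)$, and Proposition~\ref{bdd-dis} will supply uniformly bounded distortion of these branches. Ergodicity then drops out of a Lebesgue density point argument. (In fact the same computation gives exactness, but we only need ergodicity.)

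First I would record the distortion estimate for the iterates. Fix $m \ge 1$ and let $J = J_{k_{0}} \cap \overset{\circ}{T}_{\alpha}^{-1}J_{k_{1}} \cap \cdots \cap \overset{\circ}{T}_{\alpha}^{-(m-1)}J_{k_{m-1}}$ be a cylinder of $\overset{\circ}{T}_{\alpha}$ of rank~$m$. Then $J$ is a $T_{\alpha}$-cylinder $\langle a_{1}, \ldots, a_{n} \rangle$ with $n = N_{k_{0}} + \cdots + N_{k_{m-1}} \ge m$, on which $\overset{\circ}{T}_{\alpha}^{m}$ coincides with $T_{\alpha}^{n}$ and is a bijection onto $[\alpha-1,\alpha)$ with local inverse $\psi_{a_{1},\ldots,a_{n}}$. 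Proposition~\ref{bdd-dis} gives $\sup_{y}|\psi_{a_{1},\ldots,a_{n}}'(y)| \big/ \inf_{y}|\psi_{a_{1},\ldots,a_{n}}'(y)| \le 9/g^{4}$, independently of $J$ and $m$. Integrating $|\psi_{a_{1},\ldots,a_{n}}'|$ over $A$ and over $[\alpha-1,\alpha)$ (which has length~$1$), and using $\overset{\circ}{T}_{\alpha}^{-m}(A) \cap J = \psi_{a_{1},\ldots,a_{n}}(A)$, this yields a constant $C = 9/g^{4}$ such that
\[
C^{-1}\, \lambda(A)\, \lambda(J) \ \le\ \lambda\bigl(\overset{\circ}{T}_{\alpha}^{-m}(A) \cap J\bigr) \ \le\ C\, \lambda(A)\, \lambda(J)
\]
for every measurable $A \subseteq [\alpha-1,\alpha)$ and every rank-$m$ cylinder $J$.

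Next I would check that the rank-$m$ cylinders shrink to points almost everywhere, so that they can serve as neighbourhoods in a density argument. Since $\overset{\circ}{T}_{\alpha}$ preserves a probability measure equivalent to $\lambda$ (previous proposition), the set $\bigcup_{j \ge 0} \overset{\circ}{T}_{\alpha}^{-j}(B)$ is $\lambda$-null, where $B$ is the null set from Proposition~\ref{p:full}. For $x$ outside this set one has $\overset{\circ}{T}_{\alpha}^{j}(x) \notin B$, hence $N(\overset{\circ}{T}_{\alpha}^{j}(x)) \ge 1$ at each step, so the rank-$m$ cylinder containing $x$ is a $T_{\alpha}$-cylinder of length $\ge m$; by Lemma~\ref{size-lemma} its length tends to $0$ as $m \to \infty$. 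Now let $A$ satisfy $\overset{\circ}{T}_{\alpha}^{-1}(A) = A$ with $\lambda(A) > 0$ and, for contradiction, $\lambda(A^{c}) > 0$, where $A^{c} = [\alpha-1,\alpha) \setminus A$. Choose a Lebesgue density point $x$ of $A$ lying outside the null set above, and let $J^{(m)} \ni x$ be the decreasing rank-$m$ cylinders. Since $\lambda(J^{(m)}) \to 0$ and $J^{(m)} \subseteq \bigl[x - \lambda(J^{(m)}),\, x + \lambda(J^{(m)})\bigr]$, we get $\lambda(A^{c} \cap J^{(m)})/\lambda(J^{(m)}) \to 0$. On the other hand $A^{c}$ is also invariant, so $\overset{\circ}{T}_{\alpha}^{-m}(A^{c}) = A^{c}$, and the estimate of the previous paragraph forces $\lambda(A^{c} \cap J^{(m)}) \ge C^{-1}\lambda(A^{c})\,\lambda(J^{(m)})$ for all $m$, a contradiction. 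Hence $\lambda(A^{c}) = 0$, i.e.\ $\overset{\circ}{T}_{\alpha}$ is ergodic.

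Given Proposition~\ref{bdd-dis} and Lemma~\ref{size-lemma}, the argument is essentially routine; the one point that deserves care is the appeal to the Lebesgue density theorem, whose natural statement concerns symmetric intervals about~$x$ while we need density of $A$ along the cylinders $J^{(m)}$. This is harmless because each $J^{(m)}$ is an interval containing~$x$ of length $\lambda(J^{(m)})$, hence lies inside the symmetric interval of radius $\lambda(J^{(m)})$ about~$x$, so the density statement transfers with only a bounded factor lost.
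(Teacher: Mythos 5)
Your proof is correct and follows essentially the same route as the paper: a Knopp-type argument exploiting that every branch of every iterate of the jump transformation is a full branch with the uniform distortion bound of Proposition~\ref{bdd-dis}. The only cosmetic difference is that you pull back the complement $A^{c}$ to shrinking cylinders around a Lebesgue density point, whereas the paper pushes forward a full cylinder on which $A$ has density close to~$1$; these are dual formulations of the same estimate.
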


\begin{proof}
Suppose that $A$ is an invariant set of 
$\overset{\circ}{T}_{\alpha}$ with $\lambda(A) > 0$.  
For any $\varepsilon > 0$, there exists 
a full-cylinder set $J$ of length $n$ such that 
\[
\frac{\lambda (A\cap J)}{\lambda(J)} > 1 -\varepsilon.
\]
Then we see that there exists a constant $C_{2}>0$ such that  
\[
\overset{\circ}{T}_{\alpha}{\!\!\!}^{n}(A\cap J) > 1 - C_{2}\varepsilon
 \qquad  
\mbox{and} \qquad A \supset  \overset{\circ}{T}_{\alpha}{\!\!\!}^{n}(A\cap J) .  
\]
This shows $\lambda (A) = 1$. 
\end{proof}

We can now prove the ergodicity of $T_\alpha$. 

\begin{proof}[Proof of Theorem~\ref{t:ergodic}]
We refer to \cite{Schw} for determining the absolutely  
continuous invariant measure for $T_{\alpha}$ from 
that of $\overset{\circ}{T}_{\alpha}$ and the fact that 
the ergodicity of $\overset{\circ}{T}_{\alpha}$ implies that of 
$T_{\alpha}$. Indeed we put 
\begin{equation}  \label{mu-0}
\mu_{0}(A) = \sum_{n=0}^{\infty} 
\nu(T_{\alpha}^{-n} A \cap B_{n})
\end{equation}
which is an invariant measere for $T_{\alpha}$. Then 
the porperty 
\[
\sum_{n=1}^{\infty} \lambda(B_{n}) < \infty , 
\]
see \eqref{non-full-size}, ensures the finiteness
of the absolutely continuous invariant measure.  
Hence we have the invariant probability measure $\mu$ 
by normalization of $\mu_{0}$. Since $\mu$ is equivalent 
to $\nu$,  it is equivalent to the Lebesgue measure ~$\lambda$. 
Thus from Proposition~\ref{ergodicity}, it is easy to see that $T_{\alpha}$ is ergodic w.r.t.~$\mu$.   
\end{proof}

\begin{cor}
The map $T_{\alpha}$ is exact w.r.t.~$\mu$, i.e.\ the $\sigma$-algebra $\cap_{n=0}^{\infty} T_{\alpha}^{-n} \mathfrak{B}$ consists of sets of $\mu$-measures $0$ and~$1$.   
\end{cor}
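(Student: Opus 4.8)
The plan is to obtain exactness from the bounded distortion estimate of Proposition~\ref{bdd-dis}, combined with the fact — contained in the proof of Proposition~\ref{p:full} — that $\mu$-almost every $x$ lies in full cylinders of arbitrarily large length. Since $\mu$ is equivalent to Lebesgue measure~$\lambda$, it is enough to prove that every $A \in \bigcap_{n\ge0} T_\alpha^{-n}\mathfrak{B}$ with $\mu(A) > 0$ has $\mu(A) = 1$.

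First I would record a distortion bound on full cylinders. If $J = \langle a_1, \ldots, a_n\rangle$ is full, then $T_\alpha^n$ restricts to a bijection from $J$ onto $[\alpha-1,\alpha)$, a set of Lebesgue measure $1$, and its inverse branch $\psi_J := \psi_{a_1, \ldots, a_n}$ satisfies $\lambda(\psi_J(S)) = \int_S |\psi_J'(y)|\,\mathrm{d}y$ for every Borel $S \subseteq [\alpha-1,\alpha)$. Dividing the estimates of Proposition~\ref{bdd-dis} for $S$ and for $[\alpha-1,\alpha)$ yields
\[
\frac{g^4}{9}\,\lambda(S) \le \frac{\lambda(\psi_J(S))}{\lambda(J)} \le \frac{9}{g^4}\,\lambda(S),
\]
with constants independent of $J$ and $n$. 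Now take $A$ as above; since $A \in T_\alpha^{-n}\mathfrak{B}$ we may write $A = T_\alpha^{-n}A_n$ with $A_n \in \mathfrak{B}$ for each $n\ge0$, and for a full cylinder $J$ of length $n$ we get $A \cap J = \psi_J(A_n)$, hence
\[
\frac{\lambda(A \cap J)}{\lambda(J)} \ge \frac{g^4}{9}\,\lambda(A_n).
\]

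Next I would argue by contradiction: suppose $\mu(A^c) > 0$, so $\lambda(A^c) > 0$, and fix a Lebesgue density point $x$ of $A^c$ which in addition lies in full cylinders $J_i := \langle a_1(x), \ldots, a_{n_i}(x)\rangle$ with $n_1 < n_2 < \cdots$; the set of such $x$ has full measure by the proof of Proposition~\ref{p:full}. By Lemma~\ref{size-lemma} one has $\lambda(J_i) \to 0$, and as each $J_i$ is an interval containing $x$ it lies in the interval of radius $\lambda(J_i)$ centered at $x$, so the density point property forces $\lambda(A \cap J_i)/\lambda(J_i) \to 0$. By the displayed lower bound, $\lambda(A_{n_i}) \to 0$, whence $\mu(A_{n_i}) \to 0$ because $\mu$ is absolutely continuous with respect to~$\lambda$. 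But $\mu$ is $T_\alpha$-invariant, so $\mu(A_{n_i}) = \mu(T_\alpha^{-n_i}A_{n_i}) = \mu(A)$ for all $i$, forcing $\mu(A) = 0$, a contradiction. Hence $\mu(A^c) = 0$, i.e.\ $\mu(A) = 1$, and $T_\alpha$ is exact.

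The proof is essentially immediate from the results already obtained; the main (mild) obstacle is bookkeeping: one must match the distortion bound, stated in terms of Lebesgue measure, with the behavior of $\mu$ under $T_\alpha$ (handled by invariance and absolute continuity), and one must reach inside the proof of Proposition~\ref{p:full} — rather than merely quote its statement — to extract that almost every point is contained in arbitrarily long full cylinders. I do not anticipate any serious difficulty beyond that.
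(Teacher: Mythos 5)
Your proof is correct, but it follows a genuinely different route from the paper. The paper's argument is very short: it shows that for every interval $I \subset [\alpha-1,\alpha)$ one has $T_\alpha^n(I) = [\alpha-1,\alpha)$ for all large $n$ (by picking an inner point of $I$ whose cylinder of some length is full and, being small, contained in $I$), and then invokes Rokhlin's criterion for exact endomorphisms \cite{Ro} to conclude. You instead work directly with the tail $\sigma$-algebra: writing $A = T_\alpha^{-n}A_n$, you combine the distortion bounds of Proposition~\ref{bdd-dis} on full cylinders with the Lebesgue density theorem and the $T_\alpha$-invariance of $\mu$ to force $\mu(A)=\lim_i\mu(A_{n_i})=0$ whenever $A^c$ has positive measure. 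Each step checks out: the identity $A \cap J = \psi_J(A_n)$ for a full cylinder $J$ of length $n$, the two-sided bound $\frac{g^4}{9}\lambda(A_n) \le \lambda(A\cap J)/\lambda(J)$, the passage from the density point of $A^c$ to $\lambda(A\cap J_i)/\lambda(J_i)\to 0$ (valid since $J_i \subset B(x,\lambda(J_i))$ and the two sets have comparable measure), and the implication $\lambda(A_{n_i})\to 0 \Rightarrow \mu(A_{n_i})\to 0$ from absolute continuity of the finite measure $\mu$. What your approach buys is self-containedness: it makes explicit the bounded-distortion mechanism that the paper leaves implicit in the citation of \cite{Ro}, at the cost of being longer. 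What the paper's approach buys is brevity, though as written it only verifies the hypothesis of Rokhlin's criterion for intervals and leaves the reader to supply (from \cite{Ro} and the distortion estimates) the passage to arbitrary positive-measure sets --- essentially the content you wrote out in full.
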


\begin{proof}
For any interval $I \subset [\alpha - 1, \alpha)$, we
have 
\[
\lim_{n \to \infty} T_{\alpha}^{n}(I) 
= [\alpha - 1, \alpha).
\]
Indeed, from the proof of Proposition~\ref{p:full}, we can 
choose an innner point $x$ of $I$ so that 
$\langle a_{1}(x), a_{2}(x), \ldots , a_{n}(x) \rangle$ 
is a full cylinder.  This shows the assertion of this 
corollary; see~\cite{Ro}. 
\end{proof}

\begin{rem}
It is possible to show that $T_{\alpha}$ is weak Bernoulli following the idea of the 
proof by R.~Bowen \cite{Bow}, and the proof is similar to the case of other $\alpha$-continued fraction maps; see~\cite{N-N}. 
\end{rem}

\section{Planar natural extension}
We consider the planar natural extension map
\[
\mathcal{T}_\alpha:\, (x, y) \mapsto \bigg(\frac{1}{x} - \bigg\lfloor \frac{1}{x} + 1-\alpha\bigg\rfloor, \frac{1}{y+\big\lfloor \frac{1}{x} + 1-\alpha\big\rfloor}\bigg),
\]
with $\mathcal{T}_\alpha(0,y) = (0,0)$, and the natural extension domain
\[
\Omega_\alpha = \bigcup_{n\ge0} \overline{\mathcal{T}_\alpha^n\big([\alpha{-}1, \alpha) \times \{0\}\big)}.
\]
It is well known that $\Omega_1 = [0,1]^2$. 
It is easy to see that $\left(\Omega_{\alpha}, \mathcal T_{\alpha}, \frac{dx\,dy}{(1 + xy)^{2}}\right)$ is a natural extension of~$T_{\alpha}$ if $\Omega_{\alpha}$ has positive (two-dimensional) Lebesgue measure; see \cite[Theorem~1]{K-S-S}.
The invariance of the measure $\hat{\mu}$ given by $\mathrm{d} \hat{\mu} = \frac{\mathrm{d}x\,\mathrm{d}y}{(1 + xy)^{2}}$ 
is proved in the same way as those in \cite{Na-1, T-I}.  

The shape of $\Omega_\alpha$ was determined by Tanaka and Ito~\cite{T-I} for 
$\alpha \in [1/2, g]$.
In particular, we have 
\begin{equation} \label{e:Omegag}
\Omega_g = [-g^2,g^2] \times [1-\sqrt{2},\tfrac{1}{\sqrt{2}}-1] \cup [-g^2,g] \times [\tfrac{1}{\sqrt{2}}-1,2-\sqrt{2}];
\end{equation}
see Figure~\ref{f:natext}.  
The main purpose of this section is to prove that $\Omega_\alpha$ has positive measure for $\alpha > g$.  
To this end, we show that $\Omega_\alpha$ is contained in a certain polygon~$X_\alpha$, and then we relate $\Omega_\alpha$ to~$\Omega_g$.

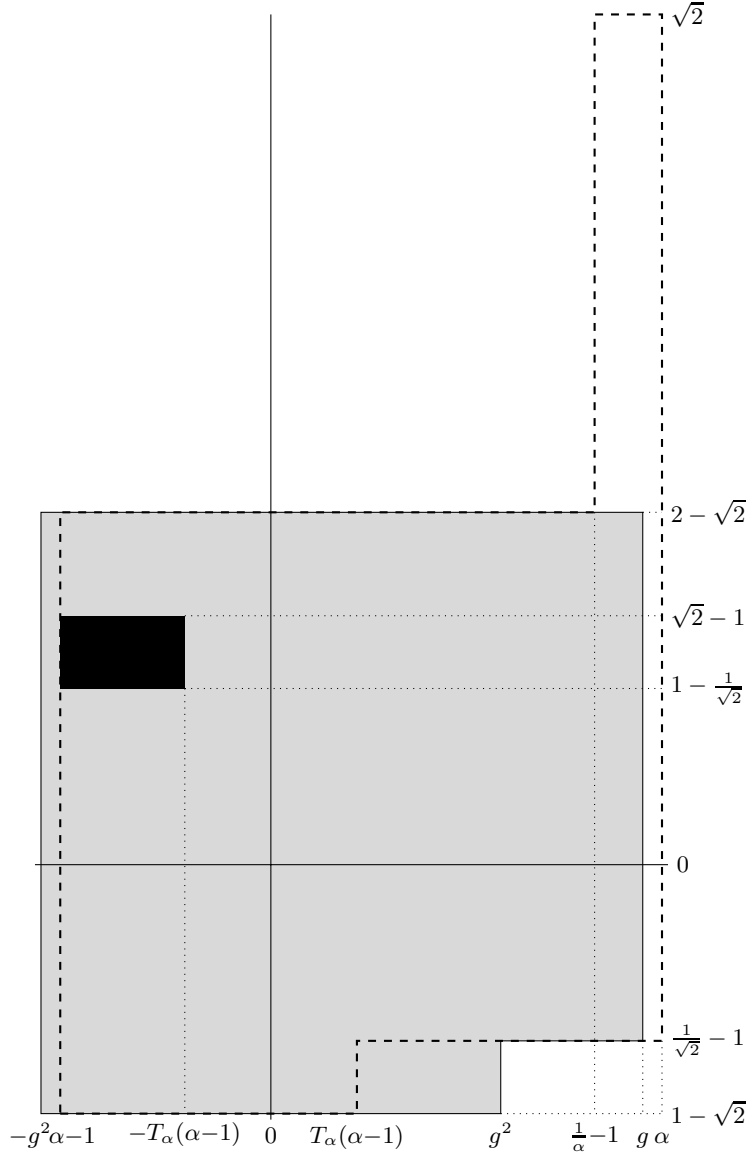
\begin{figure}[ht]
\begin{tikzpicture}[scale=8]
\small
\filldraw[fill=black!15](-.382,.-.414)--(.382,-.414)--(.382,-.293)--(.618,-.293)--(.618,.586)--(-.382,.586)--cycle;
\draw[thick,dashed](-.35,-.414)--(.143,-.414)--(.143,-.293)--(.65,-.293)--(.65,1.414)--(.538,1.414)--(.538,.586)--(-.35,.586)--cycle;
\fill[black](-.35,.293)--(-.143,.293)--(-.143,.414)--(-.35,.414);
\draw(0,-.424)--(0,1.414) (-.392,0)--(.66,0);
\draw[dotted](.618,-.293)--(.618,-.414) (.382,-.414)--(.65,-.414) (.618,.586)--(.65,.586) (-.143,.293)--(-.143,-.414) (.65,-.293)--(.65,-.414) (-.143,.293)--(.65,.293) (-.143,.414)--(.65,.414) (.538,.586)--(.538,-.414);
\node[below] at (0,-.414){$\vphantom{g^2}0$};
\node[below] at (-.4,-.414) {${-}g^2$};
\node[below] at (.382,-.414) {$g^2$};
\node[below] at (.618,-.414) {$\vphantom{g^2}g$};
\node[below] at (-.33,-.414) {$\vphantom{g^2}\alpha{-}1$};
\node[below] at (-.143,-.414) {${-}T_\alpha(\alpha{-}1)$};
\node[below] at (.143,-.414) {$\vphantom{g^2}T_\alpha(\alpha{-}1)$};
\node[below] at (.538,-.414) {$\frac{1}{\alpha}{-}1$};
\node[below] at (.65,-.414) {$\vphantom{g^2}\alpha$};
\node[right] at (.65,.586){$2-\sqrt{2}$};
\node[right] at (.66,0){$0$};
\node[right] at (.65,-.293){$\frac{1}{\sqrt{2}}-1$};
\node[right] at (.65,-.414){$1-\sqrt{2}$};
\node[right] at (.65,1.414){$\sqrt{2}$};
\node[right] at (.65,.293){$1-\frac{1}{\sqrt{2}}$};
\node[right] at (.65,.414){$\sqrt{2}-1$};
\end{tikzpicture}
\caption{The natural extension domain $\Omega_g$ is in grey; for $\alpha=13/20$, $\Omega_\alpha$ is contained in the dashed polygon $X_\alpha$ and contains the black rectangle.} \label{f:natext}
\end{figure}

\begin{lem} \label{l:Xalpha}
Let $\alpha \in (g,1)$ and $d = -a_1(\alpha-1)$.
We have $\Omega_\alpha \subset X_\alpha$ with
\[
X_\alpha = \big[\alpha-1,T_\alpha(\alpha-1)\big] \times \big[\tfrac{1}{2-\sqrt{2}-d},\tfrac{1}{1-\sqrt{2}-d}\big] \cup \big[\alpha-1,\alpha\big] \times \big[\tfrac{1}{1-\sqrt{2}-d},2-\sqrt{2}\big] \cup \big[\tfrac{1}{\alpha}-1, \alpha\big] \times \big[2-\sqrt{2},\sqrt{2}\big].
\]
\end{lem}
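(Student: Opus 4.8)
The plan is to use that $X_\alpha$ is closed and forward invariant under $\mathcal{T}_\alpha$. Since $\Omega_\alpha=\bigcup_{n\ge0}\overline{\mathcal{T}_\alpha^{\,n}\big([\alpha{-}1,\alpha)\times\{0\}\big)}$ and $X_\alpha$ is a finite union of closed rectangles, it suffices to check that $[\alpha{-}1,\alpha)\times\{0\}\subseteq X_\alpha$ and that $\mathcal{T}_\alpha$ maps every $(x,y)\in X_\alpha$ with $x\in[\alpha{-}1,\alpha)$ into $X_\alpha$; since the first coordinate of $\mathcal{T}_\alpha$ always lies in $[\alpha{-}1,\alpha)$, an induction then gives $\mathcal{T}_\alpha^{\,n}\big([\alpha{-}1,\alpha)\times\{0\}\big)\subseteq X_\alpha$ for all $n$, and taking closures yields $\Omega_\alpha\subseteq X_\alpha$. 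The base inclusion is immediate: $0$ lies in the $y$-interval $\big[\tfrac1{1-\sqrt2-d},2-\sqrt2\big]$ and $[\alpha{-}1,\alpha)$ in the $x$-interval of the middle rectangle.

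For the forward invariance I would first record the branch structure of $T_\alpha$ on $[\alpha{-}1,\alpha)$ for $\alpha\in(g,1)$: the digits that occur are exactly $a\ge1$ and $a\le-d$; the branch with digit $a$ is the restriction of $x\mapsto\tfrac1x-a$, it maps onto $[\alpha{-}1,\alpha)$ when $a\ge2$ or $a\le-d-1$ (using $\tfrac1\alpha+1-\alpha\le2$, i.e.\ $\alpha\ge g$, in the first case, and $(d{+}1{-}\alpha)(1{-}\alpha)\ge1$, the defining inequality for $d$, in the second), its image is $\big(\tfrac1\alpha-1,\alpha\big)$ when $a=1$, and its image is $\big[\alpha{-}1,T_\alpha(\alpha{-}1)\big]$ when $a=-d$. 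I would also record the single elementary fact that, when $d=3$, one has $T_\alpha(\alpha{-}1)=\tfrac1{\alpha-1}+3<\tfrac1{2+\alpha}$, which is equivalent to $(\alpha{-}1)(\alpha{+}2)>-1$, i.e.\ to $\alpha>g$; consequently, for $d=3$ the digits $1$ and $2$ are never generated by points in the bottom rectangle $\big[\alpha{-}1,T_\alpha(\alpha{-}1)\big]\times\big[\tfrac1{2-\sqrt2-d},\tfrac1{1-\sqrt2-d}\big]$.

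Then I would check $\mathcal{T}_\alpha(x,y)=\big(T_\alpha(x),\tfrac1{y+a}\big)\in X_\alpha$ for $(x,y)\in X_\alpha$ with $x<\alpha$ and $a=a_1(x)$, by cases on $a$ (the case $x=0$ being trivial, as $\mathcal{T}_\alpha(0,y)=(0,0)$ lies in the middle rectangle). For $a\ge3$: $T_\alpha(x)\in[\alpha{-}1,\alpha)$ and, since $y\ge\tfrac1{2-\sqrt2-d}>-1$ on $X_\alpha$, $\tfrac1{y+a}\in(0,\tfrac12]\subseteq(0,2-\sqrt2)$, so the image lies in the middle rectangle. For $a=2$: $T_\alpha(x)\in[\alpha{-}1,\alpha)$ and $\tfrac1{y+2}\le2-\sqrt2$, because if $(x,y)$ is in the middle or top rectangle then $y\ge\tfrac1{1-\sqrt2-d}\ge\tfrac1{\sqrt2}-1$ so $\tfrac1{y+2}\le\tfrac1{1/\sqrt2+1}=2-\sqrt2$, while if $(x,y)$ is in the bottom rectangle then $x\le T_\alpha(\alpha{-}1)$ forces $d\ge4$, for which $\tfrac1{y+2}\le\tfrac1{\frac1{2-\sqrt2-d}+2}\le2-\sqrt2$. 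For $a=1$: $T_\alpha(x)=\tfrac1x-1\in\big(\tfrac1\alpha-1,\alpha\big)$, which lies in the $x$-intervals of the middle and top rectangles, and the same alternative yields $0<\tfrac1{y+1}\le\sqrt2$, so the image lies in their union. For $a\le-d-1$: $T_\alpha(x)\in[\alpha{-}1,\alpha)$ and, since $x<0$ forces $y\le2-\sqrt2$, $\big|\tfrac1{y+a}\big|\le\tfrac1{d-1+\sqrt2}=\big|\tfrac1{1-\sqrt2-d}\big|$, so the image is in the middle rectangle. For $a=-d$: $T_\alpha(x)\in\big[\alpha{-}1,T_\alpha(\alpha{-}1)\big]$ and, from $\tfrac1{2-\sqrt2-d}\le y\le2-\sqrt2$, one gets $\tfrac1{y-d}\in\big[\tfrac1{2-\sqrt2-d},\tfrac1{1-\sqrt2-d}\big]$, so the image is in the bottom rectangle.

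The \emph{main obstacle}, which also explains why the three rectangles of $X_\alpha$ have the shapes they do, is the coupling between $d=-a_1(\alpha{-}1)$ and the location of the orbit point $T_\alpha(\alpha{-}1)$: the bottom rectangle sits at its lowest exactly when $d=3$, and that is precisely the range in which $T_\alpha(\alpha{-}1)$ is small enough that the dangerous digits $1$ and $2$ cannot be produced from it, whereas for $d\ge4$ the bottom rectangle has risen just enough for the direct estimates to close. Checking that the constants $\sqrt2$ and $2-\sqrt2$ are the exact thresholds making all the one-step inequalities (such as $\tfrac1{1/\sqrt2+1}=2-\sqrt2$ and $\tfrac1{\frac1{2-\sqrt2-d}+2}\le2-\sqrt2$ for $d\ge4$) consistent is the bulk of the work, but each inequality is elementary.
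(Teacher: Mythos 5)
Your proof is correct and follows essentially the same route as the paper: both verify that $X_\alpha$ contains the base segment and is forward invariant under $\mathcal{T}_\alpha$, by splitting according to the digit branches and using the same key inequalities ($\tfrac{1}{2-\sqrt{2}-d}=1-\sqrt{2}$ for $d=3$, $\tfrac{1}{2-\sqrt{2}-d}\ge\tfrac{1}{\sqrt{2}}-1$ for $d\ge4$, and $T_\alpha(\alpha-1)<\tfrac{1}{\alpha+2}$ when $d=3$). The only difference is presentational: the paper records the images of five explicit rectangles, while you argue branch by branch on the digit $a$.
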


\begin{proof}
We see that $\mathcal{T}_\alpha(X_\alpha) \subset X_\alpha$ by determining the images of rectangles
\begin{align*}
\mathcal{T}_\alpha\Big(\big[\alpha-1,\tfrac{1}{\alpha-d-1}\big] \times \big[1-\sqrt{2},2-\sqrt{2}\big]\Big) & = \big[\alpha-1,T_\alpha(\alpha-1)\big] \times \big[\tfrac{1}{2-\sqrt{2}-d},\tfrac{1}{1-\sqrt{2}-d}\big], \\
\mathcal{T}_\alpha\Big(\big(\tfrac{1}{\alpha-d-1}, 0\big) \times \big[1-\sqrt{2},2-\sqrt{2}\big]\Big) & = \big[\alpha-1,\alpha\big) \times \big[\tfrac{1}{1-\sqrt{2}-d},0\big), \\
\mathcal{T}_\alpha\Big(\big(0, \tfrac{1}{\alpha+2}\big] \times \big[1-\sqrt{2}, \sqrt{2}\big]\Big) & = \big[\alpha-1,\alpha\big) \times \big(0, \tfrac{1}{4-\sqrt{2}}\big], \\
\mathcal{T}_\alpha\Big(\big(\tfrac{1}{\alpha+2},\tfrac{1}{\alpha+1}\big] \times \big[\tfrac{1}{\sqrt{2}}-1,\sqrt{2}\big]\Big) & = \big[\alpha-1,\alpha\big) \times \big[1-\tfrac{1}{\sqrt{2}}, 2-\sqrt{2}\big], \\
\mathcal{T}_\alpha\Big(\big(\tfrac{1}{\alpha+1}, \alpha\big] \times \big[\tfrac{1}{\sqrt{2}}-1,\sqrt{2}\big]\Big) & = \big[\tfrac{1}{\alpha}-1,\alpha\big) \times \big[\sqrt{2}-1, \sqrt{2}\big],
\end{align*}
and by using that $\frac{1}{2-\sqrt{2}-d} = \frac{-1}{1+\sqrt{2}} = 1-\sqrt{2}$ if $d=3$, $\frac{1}{2-\sqrt{2}-d} \ge \frac{-1}{2+\sqrt{2}} = \frac{1}{\sqrt{2}}-1$ if $d \ge 4$, $T_\alpha(\alpha-1) = \frac{1}{\alpha-1} + 3 < \frac{1}{\alpha+2}$ if $d=3$, and $\frac{1}{4-\sqrt{2}} < \sqrt{2}-1$. 
This implies that $\Omega_\alpha \subset X_\alpha$.
\end{proof}

We establish a relation between $\alpha$-expansions for different $\alpha$; see also ~\cite{C-L-S}.

\begin{lem} \label{l:alphabeta}
Let $g \le \alpha \le \beta \le 1$, $x \in [\alpha-1,\alpha)$, $z \in [\beta-1,\beta)$. 
\begin{enumerate}
\item \label{i:alphabeta1}
If $x=z$ or $(x+1)(1-z) = 1$ or $(1-x)(z+1) = 1$, then $T_\beta(z) - T_\alpha(x) \in \{0,1\}$.
\item \label{i:alphabeta2}
If $x+z = 0$ or $(x+1)(z+1) = 1$, then $T_\alpha(x) + T_\beta(z) \in \{0,1\}$.
\item \label{i:alphabeta4}
If $z-x =1$, then $(x+1) (T_\beta(z)+1) = 1$.
\item \label{i:alphabeta3}
If $x+z = 1$, then 
\[
\begin{cases}  
(T_\alpha(x)+1)(1-z) = 1 & \text{if } x > \frac{1}{\alpha+1}, \\  
(1-x)(T_\beta(z)+1) = 1 & \text{if } z > \frac{1}{\beta+1}, \\
\big(T_\alpha(x)+1\big) \big(T_\beta(z)+1\big) = 1 & \text{otherwise}.
\end{cases}
\] 
\end{enumerate}
\end{lem}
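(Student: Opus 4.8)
The plan is to treat the four items one at a time. In each case the hypothesis on $(x,z)$ determines $\frac1z$ explicitly in terms of $x$ (for~\eqref{i:alphabeta3} one keeps $\frac1x$ around as well), so that after substitution the asserted identity becomes a statement about the two integers $a = \lfloor\frac1x+1-\alpha\rfloor$ and $b = \lfloor\frac1z+1-\beta\rfloor$, for which $T_\alpha(x) = \frac1x-a \in [\alpha-1,\alpha)$ and $T_\beta(z) = \frac1z-b \in [\beta-1,\beta)$. This statement is then forced by these two containments together with the ranges of $x,z,\alpha,\beta$ and the golden-ratio identity $g^2+g=1$, equivalently $\frac1g = g+1$.

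For~\eqref{i:alphabeta1} the three hypotheses give $\frac1z = \frac1x$, $\frac1z = \frac1x+1$ and $\frac1z = \frac1x-1$; in each case the integer shift is swallowed by the floor, so $T_\beta(z) = \frac1x - \lfloor\frac1x+1-\beta\rfloor$ and hence $T_\beta(z)-T_\alpha(x) = \lfloor\frac1x+1-\alpha\rfloor - \lfloor\frac1x+1-\beta\rfloor \in \{0,1\}$ because $0 \le \beta-\alpha < 1$. For~\eqref{i:alphabeta2} the hypotheses give $\frac1z = -\frac1x$ resp.\ $\frac1z = -\frac1x-1$, so $T_\alpha(x)+T_\beta(z)$ equals $-(a+b)$ resp.\ $-(a+b)-1$; adding the containments $a \in (\frac1x-\alpha,\frac1x+1-\alpha]$ and $b \in (-\frac1x-\beta,-\frac1x+1-\beta]$ resp.\ $b \in (-1-\frac1x-\beta,-\frac1x-\beta]$ traps $a+b$ in a half-open interval of length~$2$, and the bounds $1 < \alpha+\beta \le 2$ (which hold since $\alpha,\beta \in [g,1]$ and $2g>1$) force $a+b \in \{-1,0\}$ resp.\ $\{-2,-1\}$, which is the claim.

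For~\eqref{i:alphabeta4} we have $z = x+1$, and intersecting $z \in [\alpha,\alpha+1)$ with $z \in [\beta-1,\beta)$ forces $z \in [\alpha,\beta) \subseteq [g,1)$ (in particular $\alpha<\beta$ and $\beta>g$); then $\frac1z \in (1,\frac1g] = (1,g+1]$ and $\frac1z+1-\beta \in (1,2)$, the lower bound coming from $\frac1z > \frac1\beta$ and $\frac1\beta+1-\beta \ge 1$, the upper one from $\frac1z \le \frac1g = g+1$ and $\beta > g$, so $b=1$, $T_\beta(z) = \frac1{x+1}-1$, and $(x+1)(T_\beta(z)+1)=1$. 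For~\eqref{i:alphabeta3} we have $z = 1-x$, the ranges give $x \in (1-\beta,\alpha)$, and the three subcases are mutually exclusive and exhaustive (the first two cannot hold together, as that would need $\frac1{\alpha+1} < \frac\beta{\beta+1}$, i.e.\ $\alpha\beta>1$). If $x > \frac1{\alpha+1}$ then $\frac1x \in (\frac1\alpha,\alpha+1)$, so $\frac1x+1-\alpha \in (1,2)$, $a=1$, and $(T_\alpha(x)+1)(1-z) = \frac1x\cdot x = 1$; the subcase $z > \frac1{\beta+1}$ is symmetric. Otherwise $x \le \frac1{\alpha+1}$ and $z \le \frac1{\beta+1}$ force $x \in [\frac\beta{\beta+1},\frac1{\alpha+1}]$, hence $\frac1x+1-\alpha \in [2,2+\frac1\beta-\alpha] \subseteq [2,3)$ — here $\frac1\beta-\alpha \le \frac1g-g = 1$, and the value $3$ is excluded since it would force $\alpha=\beta=g$, for which $x \in (g^2,g)$ gives directly $\frac1x+1-g \in (2,3)$ — so $a=2$ and, by symmetry, $b=2$, whence $(T_\alpha(x)+1)(T_\beta(z)+1) = (\frac1x-1)(\frac1{1-x}-1) = 1$.

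All the computations are elementary; the two points requiring care are the tightness of the interval estimates in~\eqref{i:alphabeta2} (one genuinely uses both $\alpha+\beta>1$ and $\alpha+\beta\le2$) and the determination $a=b=2$ in the last subcase of~\eqref{i:alphabeta3} (resting on $g^2+g=1$). I expect the real work to be the bookkeeping: checking, subcase by subcase, that the relevant intersection of $[\alpha-1,\alpha)$ and $[\beta-1,\beta)$ under the imposed relation is non-empty, and that the three subcases of~\eqref{i:alphabeta3} genuinely partition the possibilities.
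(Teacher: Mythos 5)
Your proof is correct and follows essentially the same route as the paper's: the same four-way case split, the same reduction to pinning down the integer parts $\lfloor\frac1x+1-\alpha\rfloor$ and $\lfloor\frac1z+1-\beta\rfloor$ via the ranges of $x$, $z$, $\alpha$, $\beta$ and the identity $g^2+g=1$, including the same exclusion of the degenerate endpoint $\alpha=\beta=g$ in the last subcase of~(4) (the paper phrases parts (1) and (2) directly as ``$T_\beta(z)\pm T_\alpha(x)$ is an integer lying in an interval containing only $0$ and $1$,'' which is your floor bookkeeping in different clothing). The only point you leave implicit is the trivial case $x=z=0$ in items (1) and (2), where $\frac1x$ is undefined but both conclusions hold since $T_\alpha(0)=T_\beta(0)=0$.
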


\begin{proof}
In case (\ref{i:alphabeta1}), we have $\frac{1}{x} - \frac{1}{z} \in \{-1,0,1\}$ or $x=z=0$, thus $T_\beta(z)-T_\alpha(x) \in \mathbb{Z}$.
We clearly have $T_\beta(z)-T_\alpha(x) \in (\beta-\alpha-1, \beta-\alpha+1) \subset (-1,2-g)$, thus $T_\beta(z) - T_\alpha(x) \in \{0,1\}$. 

In case~(\ref{i:alphabeta2}), $\frac{1}{x} + \frac{1}{z} \in \{-1,0\}$ or $x=z=0$ gives that $T_\alpha(x) + T_\beta(z) \in \mathbb{Z} \cap [\alpha+\beta-2, \alpha+\beta) = \{0,1\}$.

In case (\ref{i:alphabeta4}), we have $z = x+1 \ge \alpha \ge g$, thus $T_\beta(z) = \frac{1}{z}-1$ and $(x+1) (T_\beta(z)+1) = 1$. 

Finally, in case (\ref{i:alphabeta3}), if $x > \frac{1}{\alpha+1}$, then $T_\alpha(x) = \frac{1}{x}-1$ and $(T_\alpha(x)+1)(1-z) = 1$. 
Similarly, $z > \frac{1}{\beta+1}$ implies that $(1-x)(T_\beta(z)+1) = 1$.
If $x \le \frac{1}{\alpha+1}$ and $z \le \frac{1}{\beta+1}$, then $x = 1-z \ge \frac{\beta}{\beta+1}\ge \frac{1}{g+2} \ge \frac{1}{\alpha+2}$ and $z = 1-x \ge \frac{\alpha}{\alpha+1} \ge \frac{1}{g+2} \ge \frac{1}{\beta+2}$.
We cannot have $x = \frac{1}{\alpha+2}$ because this would imply that $\alpha = g = \beta = z$, contradicting that $z < \beta$. 
Similarly, we cannot have $z = \frac{1}{\beta+2}$.
From $x \in (\frac{1}{\alpha+2}, \frac{1}{\alpha+1}]$ and $z \in (\frac{1}{\beta+2}, \frac{1}{\beta+1}]$, we infer that $(T_\alpha(x)+1) (T_\beta(z)+1) = (\frac{1}{x} - 1) (\frac{1}{z} - 1) = 1$. 
\end{proof}

\begin{lem} \label{l:alphabeta2}
Let $g \le \alpha < \beta \le 1$, $x \in [\alpha-1,\alpha)$, $z \in [\beta-1,\beta)$, with $z-x \in \{0,1\}$ or $x+z \in \{0,1\}$. 
Let $n \ge 1$ be such that $T_\alpha^{n-1}(x) < \frac{\beta}{\beta+1}$.
Then there is some $k \ge 1$ such that $T_\beta^k(z) - T_\alpha^n(x) \in \{0,1\}$ or  $T_\alpha^n(z) + T_\beta^k(x) \in \{0,1\}$.
\end{lem}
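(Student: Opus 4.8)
The plan is to track how an $\alpha$-expansion and a $\beta$-expansion stay "coupled" through the relations appearing in Lemma~\ref{l:alphabeta}, reducing the statement to finite bookkeeping. I would call a pair $(u,v)\in[\alpha-1,\alpha)\times[\beta-1,\beta)$ \emph{good} if $v-u\in\{0,1\}$ or $u+v\in\{0,1\}$ --- i.e.\ $u=v$, $v-u=1$, $u+v=0$ or $u+v=1$ --- and \emph{auxiliary} if $(u+1)(1-v)=1$, $(1-u)(v+1)=1$ or $(u+1)(v+1)=1$. Our hypothesis says $(x,z)$ is good, and the conclusion says that some pair $(T_\alpha^n(x),T_\beta^k(z))$ with $k\ge1$ is good in this sense. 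Lemma~\ref{l:alphabeta} is exactly the statement that from any good or auxiliary pair, applying $T_\alpha$ to the first coordinate and/or $T_\beta$ to the second coordinate the prescribed number of times again produces a good or auxiliary pair; moreover every intermediate pair still lies in $[\alpha-1,\alpha)\times[\beta-1,\beta)$ (images of $T_\alpha$ resp.\ $T_\beta$ do), so Lemma~\ref{l:alphabeta} can be iterated.

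From a good pair one then reaches another good pair by the following \emph{macro-steps}, read off from the four parts of Lemma~\ref{l:alphabeta}: from $u=v$ or $u+v=0$, one step each of $T_\alpha$ and $T_\beta$ (parts~(\ref{i:alphabeta1}),~(\ref{i:alphabeta2})); from $v-u=1$, part~(\ref{i:alphabeta4}) then part~(\ref{i:alphabeta2}), using $T_\alpha$ once and $T_\beta$ twice; from $u+v=1$ with $v>\frac{1}{\beta+1}$, the middle branch of part~(\ref{i:alphabeta3}) then part~(\ref{i:alphabeta1}), using $T_\alpha$ once and $T_\beta$ twice; and from $u+v=1$ with $v\le\frac{1}{\beta+1}$, the first or last branch of part~(\ref{i:alphabeta3}) then part~(\ref{i:alphabeta1}) resp.\ (\ref{i:alphabeta2}), using $T_\alpha$ twice and $T_\beta$ once or twice. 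In every macro-step $T_\beta$ advances at least once, and $T_\alpha$ advances by $1$ or $2$; the only good pairs that force the value $2$ are those with $u+v=1$ and $v\le\frac{1}{\beta+1}$. Since $\alpha\beta\le1$ gives $\frac{\beta}{\beta+1}\le\frac{1}{\alpha+1}$, a good pair $(u,v)$ with $u+v=1$ and $u<\frac{\beta}{\beta+1}$ has $v=1-u>\frac{1}{\beta+1}$, so such pairs do \emph{not} force the value $2$.

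The proof then goes by induction on $n$. For $n=1$ the pair $(x,z)$ is good with $x<\frac{\beta}{\beta+1}$, so by the last observation a single macro-step advancing $T_\alpha$ exactly once yields a good pair $(T_\alpha(x),T_\beta^k(z))$ with $k\in\{1,2\}$, which is the conclusion. For $n\ge2$, perform one macro-step from $(x,z)$; it reaches a good pair $(T_\alpha^\delta(x),T_\beta^{k_0}(z))$ with $\delta\in\{1,2\}$ and $k_0\ge1$. Since $\delta\le2\le n$, either $\delta=n$ and we are done, or $\delta<n$ and we apply the induction hypothesis to $(T_\alpha^\delta(x),T_\beta^{k_0}(z))$ with parameter $n-\delta\ge1$; this is legitimate because $T_\alpha^{(n-\delta)-1}\big(T_\alpha^\delta(x)\big)=T_\alpha^{n-1}(x)<\frac{\beta}{\beta+1}$. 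This produces a good pair $(T_\alpha^n(x),T_\beta^k(z))$ with $k\ge k_0\ge1$.

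The hard part will be the case bookkeeping rather than any single estimate: one must verify that each branch of Lemma~\ref{l:alphabeta} really outputs one of the seven listed relations, keep an exact count of how often $T_\alpha$ and $T_\beta$ are applied along each macro-step, and confirm domain membership at every intermediate pair. The only genuinely quantitative input is $\frac{\beta}{\beta+1}\le\frac{1}{\alpha+1}$, which is precisely what makes the hypothesis $T_\alpha^{n-1}(x)<\frac{\beta}{\beta+1}$ strong enough to exclude the branch $u>\frac{1}{\alpha+1}$ of part~(\ref{i:alphabeta3}) --- the branch that would force $T_\alpha$ to advance twice and overshoot $T_\alpha^n(x)$ --- once the current $\alpha$-iterate has reached $T_\alpha^{n-1}(x)$.
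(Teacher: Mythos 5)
Your proof is correct and follows essentially the same route as the paper: iterate Lemma~\ref{l:alphabeta} along the chain of ``good''/``auxiliary'' relations and use $T_\alpha^{n-1}(x)<\frac{\beta}{\beta+1}$ (equivalently $z>\frac{1}{\beta+1}$ when $x_{n-1}+z=1$, together with $\frac{\beta}{\beta+1}<\frac{1}{\alpha+1}$) to rule out the branches of part~(\ref{i:alphabeta3}) that would advance the $\alpha$-orbit past index~$n$. The paper compresses this into two sentences, whereas your macro-step bookkeeping and induction on $n$ make the same mechanism explicit; no gap.
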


\begin{proof}
Denote $x_j = T_\alpha^j(x)$ and $z_j = T_\beta^j(z)$.
By Lemma~\ref{l:alphabeta} and since $x_{n-1} < \frac{\beta}{\beta+1} < \frac{1}{\alpha+1}$, we have $z_k - x_n \in \{0,1\}$ or $x_n+z_k \in \{0,1\}$ or $(x_n+1)(z_k+1) = 1$ for some $k \ge 1$. 
If $(x_n+1)(z_k+1) = 1$, then $z_{k-1}-x_n=1$ (and $k \ge 2$) because $1-x_{n-1} = z_{k-1} \le \frac{1}{\beta+1}$ would contradict $x_{n-1} < \frac{\beta}{\beta+1}$.
\end{proof}

Define
\[
S(x,y) = \{(x,y), (-x,-y), (x+1,\tfrac{y}{1-y}), (1-x, \tfrac{-y}{y+1})\}.
\]

\begin{lem} \label{l:alphabeta3}
Let $g \le \alpha < \beta \le 1$, $(x,y) \in \Omega_\alpha$, $(\tilde{x},\tilde{y}) \in S(x,y)$, $(x_n,y_n) = \mathcal{T}_\alpha^n(x,y)$ for some $n \ge 1$.
If $\tilde{x} \in [\beta-1,\beta)$ and $y_n < 1-\frac{1}{\sqrt{2}}$, then there is some $k \ge 1$ such that $\mathcal{T}_\beta^k(\tilde{x},\tilde{y}) \in S(x_n,y_n)$. 
\end{lem}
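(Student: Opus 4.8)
The plan is to derive the statement for the first coordinate from the one--dimensional matching result Lemma~\ref{l:alphabeta2}, and then to observe that the second coordinate is automatically dragged along, because one step of $\mathcal{T}_\gamma$ and each of the four maps defining $S$ act on the two coordinates through a pair of M\"obius transformations with matrices of the shape $(Q,Q^{-\mathsf T})$.

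First I would check the hypotheses of Lemma~\ref{l:alphabeta2}. The first coordinates of the four points of $S(x,y)$ are $x,\,-x,\,x+1,\,1-x$, so $(\tilde x,\tilde y)\in S(x,y)$ together with $\tilde x\in[\beta-1,\beta)$ gives $\tilde x-x\in\{0,1\}$ or $x+\tilde x\in\{0,1\}$, which is exactly the compatibility condition of Lemma~\ref{l:alphabeta2} with $z=\tilde x$. It remains to deduce $T_\alpha^{n-1}(x)<\frac{\beta}{\beta+1}$ from $y_n<1-\frac1{\sqrt2}$. Assume the contrary, and write $x_{n-1}=T_\alpha^{n-1}(x)$, $a=a_n(x)$, so that $y_n=\frac1{y_{n-1}+a}$. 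From $\frac{\beta}{\beta+1}\le x_{n-1}<\alpha<1$ we get $1<\frac1{x_{n-1}}\le1+\frac1\beta<2+g$, hence $\frac1{x_{n-1}}+1-\alpha\in(2-\alpha,\,3+g-\alpha)\subset(1,3)$ and $a\in\{1,2\}$; and $1-\sqrt2\le y_{n-1}\le\sqrt2$, since $(x_{n-1},y_{n-1})\in\Omega_\alpha\subset X_\alpha$ by Lemma~\ref{l:Xalpha} (for $\alpha=g$ use the description~\eqref{e:Omegag}). Then $y_{n-1}+a>0$ and $y_n=\frac1{y_{n-1}+a}\ge\frac1{\sqrt2+2}=1-\frac1{\sqrt2}$, a contradiction. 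Hence Lemma~\ref{l:alphabeta2} applies and yields some $k\ge1$ with $T_\beta^k(\tilde x)\in\{x_n,\,x_n+1,\,-x_n,\,1-x_n\}$; that is, the first coordinate of $\mathcal{T}_\beta^k(\tilde x,\tilde y)$ coincides with that of some $(\hat x,\hat y)\in S(x_n,y_n)$.

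To pass to the second coordinate I would carry it along the argument proving Lemma~\ref{l:alphabeta2}. One step of $\mathcal{T}_\gamma$ with digit $a$ acts on the first coordinate by $N_a=\bigl(\begin{smallmatrix}-a&1\\1&0\end{smallmatrix}\bigr)$ and on the second by $M_a=\bigl(\begin{smallmatrix}0&1\\1&a\end{smallmatrix}\bigr)$, and $M_a=N_a^{-1}=N_a^{-\mathsf T}$ because $N_a$ is symmetric and unimodular; likewise each of the four maps defining $S$ has the form $(x,y)\mapsto(P\cdot x,\,P^{-\mathsf T}\!\cdot y)$ for one of $I,\ \bigl(\begin{smallmatrix}-1&0\\0&1\end{smallmatrix}\bigr),\ \bigl(\begin{smallmatrix}1&1\\0&1\end{smallmatrix}\bigr),\ \bigl(\begin{smallmatrix}-1&1\\0&1\end{smallmatrix}\bigr)$, and, more generally, every relation used in Lemma~\ref{l:alphabeta} is realized by some integer unimodular $2\times2$ matrix $P$ on the first coordinate together with $P^{-\mathsf T}$ on the second. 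Consequently, whenever the current point of the $\alpha$--orbit and the current point of the $\beta$--orbit are related through such a pair of matrices, so are their images after one further step of $\mathcal{T}_\alpha$ and the matching one or two steps of $\mathcal{T}_\beta$. Running this bookkeeping from the initial relation $(\tilde x,\tilde y)\in S(x,y)$ through to the end --- following the proof of Lemma~\ref{l:alphabeta2} --- shows that $\mathcal{T}_\beta^k(\tilde x,\tilde y)$ and $(x_n,y_n)$ are related by one of the four pairs of matrices defining $S$, i.e.\ $\mathcal{T}_\beta^k(\tilde x,\tilde y)=(\hat x,\hat y)\in S(x_n,y_n)$.

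The main obstacle is twofold. Its lighter half is the estimate above: one must rule out $x_{n-1}\ge\frac{\beta}{\beta+1}$, and in particular treat the case $a_n(x)=2$, for which the identity $\frac1{2+\sqrt2}=1-\frac1{\sqrt2}$ is precisely what makes $1-\frac1{\sqrt2}$ the correct threshold in the statement. Its heavier half is making the transfer to the second coordinate rigorous: verifying, case by case, the companion second--coordinate relation for each of the relations in Lemma~\ref{l:alphabeta} together with the equivariance $M_a=N_a^{-\mathsf T}$, and checking that no M\"obius map is evaluated at a pole along the way --- for which one uses $\Omega_\alpha\subset X_\alpha$ (and, for the final point, the hypothesis $y_n<1-\frac1{\sqrt2}$) to keep all second coordinates inside the admissible range.
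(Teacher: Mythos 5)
Your proposal is correct and follows essentially the same route as the paper: use $\Omega_\alpha\subset X_\alpha$ to turn the hypothesis $y_n<1-\frac{1}{\sqrt{2}}$ into the digit restriction forcing $T_\alpha^{n-1}(x)<\frac{\beta}{\beta+1}$, apply Lemma~\ref{l:alphabeta2} to match the first coordinates, and transfer to the second coordinate via the associated linear fractional transformations. Your matrix bookkeeping ($M_a=N_a^{-\mathsf T}$ and the four $S$-maps as $(P,P^{-\mathsf T})$ pairs) is a correct and more explicit rendering of the paper's one-line remark ``considering the associated linear fractional transformations.''
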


\begin{proof}
Since $\Omega_\alpha \subset X_\alpha$ by Lemma~\ref{l:Xalpha}, $y_n < 1-\frac{1}{\sqrt{2}}$ implies that $a_n(x) \ge 3$ or $a_n(x) < 0$, i.e., $T_\alpha^{n-1}(x) \le \frac{1}{\alpha+2} < \frac{\beta}{\beta+1}$. 
Therefore, by Lemma~\ref{l:alphabeta2}, we have some $k \ge 1$ such that $T_\beta^k(\tilde{x}) - T_\alpha^n(x) \in \{0,1\}$ or  $T_\alpha^n(\tilde{z}) + T_\beta^k(x) \in \{0,1\}$.
Considering the associated linear fractional transformations, we obtain that $\mathcal{T}_\beta^k(\tilde{x},\tilde{y}) \in S \mathcal{T}_\alpha^n(x,y)$.
\end{proof}

\begin{lem} \label{l:alphabeta4}
Let $g \le \alpha < \beta \le 1$, $(x,y) \in \Omega_\alpha$ with $y < 1-\frac{1}{\sqrt{2}}$.
Then we have $S(x,y) \cap \Omega_\beta \ne \emptyset$.
\end{lem}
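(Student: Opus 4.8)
The plan is to follow the $\mathcal{T}_\alpha$-orbit of $(x,y)$ backwards until its second coordinate becomes $0$ --- where lying over the interval $[\beta-1,\beta)$ costs nothing --- and then to transport the resulting point forward along the same orbit by a single application of Lemma~\ref{l:alphabeta3}. Since $\Omega_\alpha$ is by definition the closure of $\bigcup_{m\ge0}\mathcal{T}_\alpha^m([\alpha-1,\alpha)\times\{0\})$, I would first prove the statement for points of the special form $(x,y)=\mathcal{T}_\alpha^m(x_0,0)$ with $m\ge0$ and $x_0\in[\alpha-1,\alpha]$, and afterwards recover the general case by an approximation argument.

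For $m=0$ we have $y=0$, so $S(x_0,0)=\{(x_0,0),(-x_0,0),(x_0+1,0),(1-x_0,0)\}$. As $g\le\alpha<\beta\le1$, we have $\beta-1\le0<g\le\alpha$, whence $x_0<\alpha<\beta$ and $x_0+1\ge\alpha\ge\beta-1$; thus at least one of $x_0$ and $x_0+1$ lies in $[\beta-1,\beta)$, and the corresponding member of $S(x_0,0)$ lies in $[\beta-1,\beta)\times\{0\}\subseteq\Omega_\beta$. This settles $m=0$ and, more usefully, for any $x_0\in[\alpha-1,\alpha]$ it produces a point $(\tilde x_0,0)\in S(x_0,0)$ with $\tilde x_0\in[\beta-1,\beta)$; such a point lies in $\Omega_\beta$, and since $\mathcal{T}_\beta^k$ maps $[\beta-1,\beta)\times\{0\}$ into $\Omega_\beta$, so does $\mathcal{T}_\beta^k(\tilde x_0,0)$ for every $k\ge0$.

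For the general case, write $(x,y)=\mathcal{T}_\alpha^m(x_0,0)$ with $m\ge1$ and put $(x_j,y_j)=\mathcal{T}_\alpha^j(x_0,0)$, so $(x_m,y_m)=(x,y)$ and $y_0=0$. Choose $(\tilde x_0,0)\in S(x_0,0)$ with $\tilde x_0\in[\beta-1,\beta)$ as above, and apply Lemma~\ref{l:alphabeta3} with $(x_0,0)\in\Omega_\alpha$, $(\tilde x_0,0)\in S(x_0,0)$ and $n=m$ in the roles of ``$(x,y)$'', ``$(\tilde x,\tilde y)$'' and ``$n$''. Its hypotheses are $\tilde x_0\in[\beta-1,\beta)$ and $y_m=y<1-\tfrac{1}{\sqrt2}$, both of which hold, so we get some $k\ge1$ with $\mathcal{T}_\beta^k(\tilde x_0,0)\in S(x_m,y_m)=S(x,y)$. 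By the previous paragraph $\mathcal{T}_\beta^k(\tilde x_0,0)\in\Omega_\beta$, and hence $S(x,y)\cap\Omega_\beta\ne\emptyset$.

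Finally, a general $(x,y)\in\Omega_\alpha$ is a limit of points $\mathcal{T}_\alpha^m(x_0^{(i)},0)$ for some fixed $m$, and the open condition $y<1-\tfrac{1}{\sqrt2}$ also holds for all sufficiently large $i$. Applying the case already treated to each approximant, and passing to a subsequence along which the relevant branch of $S$ is the same, one obtains points $p_i\in S(\mathcal{T}_\alpha^m(x_0^{(i)},0))\cap\Omega_\beta$ that converge to some $p\in S(x,y)$ (the maps defining $S$ being continuous near $(x,y)$). The delicate point --- and the step I expect to be the main obstacle --- is to check that the limit $p$ still lies in $\Omega_\beta$; I would do this by arguing that the exponents $k$ furnished by Lemma~\ref{l:alphabeta3} remain bounded as $i\to\infty$, so that $p$ lies in one of the \emph{closed} sets $\overline{\mathcal{T}_\beta^k([\beta-1,\beta)\times\{0\})}$ whose union is $\Omega_\beta$, or else by invoking a closed-form description of $\Omega_\beta$.
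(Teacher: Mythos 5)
Your proof follows essentially the same route as the paper's: reduce to a point $\mathcal{T}_\alpha^n(z,0)$, pick the member of $S(z,0)$ lying over $[\beta-1,\beta)$, push it forward with Lemma~\ref{l:alphabeta3}, and finish by a limit argument. The closure issue you flag at the end is genuine but is equally present in the paper's own one-line closing sentence (``as each $(x,y)\in\Omega_\alpha$ is the limit of points $\mathcal{T}_\alpha^n(z,0)$\dots''), so your version is, if anything, the more careful one.
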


\begin{proof}
Assume first that $(x,y) = \mathcal{T}_\alpha^n(z,0)$ for some $n \ge 0$, $z \in [\alpha-1,\alpha)$, and choose $\tilde{z} \in [\beta-1,\beta)$ such that $(\tilde{z},0) \in S(z,0)$.
Since $y < 1-\frac{1}{\sqrt{2}}$, Lemma~\ref{l:alphabeta3} gives some $k \ge 0$ such that $\mathcal{T}_\alpha^k(\tilde{z},0) \in S(x,y)$, thus $S(x,y) \cap \Omega_\beta \ne \emptyset$. 
As each $(x,y) \in \Omega_\alpha$ is the limit of points $\mathcal{T}_\alpha^n(z,0)$, this proves the lemma.
\end{proof}

From Lemma~\ref{l:alphabeta4} with $\alpha=g$, we can easily conclude that $\Omega_\beta$ has positive Lebesgue measure, and the following lemma provides rectangles in the natural extension domain.

\begin{lem} \label{l:rectangles}
Let $\alpha \in (g,1)$, $d = -a_1(\alpha-1)$, $b = \lfloor T_\alpha(\alpha-1) + \alpha\rfloor$.
We have $Y_\alpha \subset \Omega_\alpha$, with 
\[
Y_\alpha = \big[\alpha-1,b-T_\alpha(\alpha-1)\big] \times \big[\tfrac{1}{d+\sqrt{2}-1-b},\tfrac{1}{d+\sqrt{2}-2-b}\big] \cup \big[\alpha-1,\alpha\big] \times \big(\tfrac{1}{d+\sqrt{2}-2-b},\sqrt{2}-1\big].
\]
\end{lem}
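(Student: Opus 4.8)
The plan is to run the argument of Lemma~\ref{l:Xalpha} ``in the opposite direction'': instead of showing that a region is forward-invariant, I would produce a finite union of rectangles $\widetilde Y_\alpha \supseteq Y_\alpha$ which, up to closures, is contained in the union of its own $\mathcal T_\alpha$-image and a union $Z$ of explicit ``seed'' rectangles already known to lie in $\Omega_\alpha$. Since $\mathcal T_\alpha$ is a product of two M\"obius transformations, hence a homeomorphism carrying rectangles to rectangles on each cylinder strip not meeting $x=0$, it maps $\overline A$ into $\overline{\mathcal T_\alpha(A)}$ on such a strip; combined with the definition of $\Omega_\alpha$ as a union of closures of forward images of $[\alpha-1,\alpha)\times\{0\}$, this shows that $\mathcal T_\alpha^n$ carries $Z$ back into $\Omega_\alpha$ for every $n$, so that iterating the inclusion above yields $\widetilde Y_\alpha\subseteq\Omega_\alpha$, and in particular $Y_\alpha\subseteq\Omega_\alpha$.

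The seeds are obtained from the explicit shape \eqref{e:Omegag} of $\Omega_g$ by Lemma~\ref{l:alphabeta4} applied with base $g$ and target $\alpha$: for $(x,y)\in\Omega_g$ with $y<\tfrac12(2-\sqrt2)$ Lemma~\ref{l:alphabeta4} puts one element of $S(x,y)$ into $\Omega_\alpha$, and since $\Omega_\alpha\subseteq[\alpha-1,\alpha)\times\mathbb R$ this element is $(x,y)$ when $x\ge\alpha-1$ and $(-x,-y)$ when $x<\alpha-1$ (on the overlap $x\in[\alpha-1,1-\alpha]$, where both are candidates, the inclusion $\Omega_\alpha\subseteq X_\alpha$ from Lemma~\ref{l:Xalpha} decides which one). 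Feeding the two rectangles of \eqref{e:Omegag} (restricted to $y<1-\tfrac1{\sqrt2}$) into this and applying $(x,y)\mapsto(x,y)$ and $(x,y)\mapsto(-x,-y)$ yields seed rectangles in $\Omega_\alpha$ whose $y$-ranges together cover $[1-\sqrt2,\sqrt2-1]$; iterating $\mathcal T_\alpha$ then extends the available $y$-values (e.g.\ applying digit~$1$ repeatedly to the top of the current $y$-range reaches up to $\sqrt2$), while iterating through full cylinders spreads the $x$-ranges.

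The two rectangles of $Y_\alpha$ are finally produced by applying $\mathcal T_\alpha$ with suitable positive digits — built around $d-b$ and $d-b-1$ for the first rectangle and around digits $\ge2$ for the second — to appropriate subrectangles of the auxiliary region, the relevant $x$-ranges being exactly the parts of the cylinders $\langle d-b\rangle$, $\langle d-b-1\rangle$, $\ldots$ that $\mathcal T_\alpha$ maps onto $[\alpha-1,b-T_\alpha(\alpha-1)]$, resp.\ onto $[\alpha-1,\alpha)$, and the $y$-endpoints $\tfrac1{d+\sqrt2-1-b}$, $\tfrac1{d+\sqrt2-2-b}$ coming out as M\"obius images of $1-\sqrt2$, $\sqrt2-1$ and $2-\sqrt2$ under those digits. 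This is precisely the step at which the integer $b=\lfloor T_\alpha(\alpha-1)+\alpha\rfloor$ enters: through $T_\alpha(\alpha-1)=\tfrac1{\alpha-1}+d$ it records the position of $T_\alpha(\alpha-1)$ relative to $\tfrac1{\alpha+2}$ and $\tfrac1{\alpha+1}$, which is what determines which subcylinders are admissible and how far the auxiliary region reaches over them (e.g.\ $\tfrac1{d-T_\alpha(\alpha-1)}$ appears as the left endpoint of the relevant subcylinder of $\langle d-b\rangle$).

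The hard part is the bookkeeping, not any single idea. One must track the $x$- and $y$-coordinates simultaneously through the various M\"obius branches, keep track of which element of $S$ and which (possibly non-full) cylinder of $T_\alpha$ is in play, resolve the overlap ambiguity via $X_\alpha$, split rectangles along the discontinuity lines of $\mathcal T_\alpha$ and along $x=0$, carry out the iterations in a workable order, and match the exact endpoints, where the integers $d$ and $b$ interact with $\sqrt2$ and $g$. As in the proof of Lemma~\ref{l:Xalpha} this forces a case distinction between $d=3$ (which, since $\alpha>g$, forces $b=0$, so that $d-b\ge3$) and $d\ge4$ (with $b\in\{0,1\}$ and again $d-b\ge3$), each case relying on small identities of the type $\tfrac1{2-\sqrt2-d}=1-\sqrt2$ for $d=3$ and $\tfrac1{2-\sqrt2-d}\ge\tfrac1{\sqrt2}-1$ for $d\ge4$ that already occurred there.
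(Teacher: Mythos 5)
Your proposal assembles the right ingredients (Lemma~\ref{l:alphabeta4} with base $g$, the explicit shape \eqref{e:Omegag} of $\Omega_g$, and the bound $\Omega_\alpha\subset X_\alpha$), but the step that actually has to produce the rectangles is wrong. You claim that for $(x,y)\in\Omega_g$ with $y<1-\frac{1}{\sqrt2}$ the element of $S(x,y)$ lying in $\Omega_\alpha$ is $(x,y)$ itself whenever $x\ge\alpha-1$, with $X_\alpha$ only resolving the overlap $x\in[\alpha-1,1-\alpha]$. This is false: any $(x,y)\in\Omega_g$ with $x\in\big(\max\{T_\alpha(\alpha-1),1-\alpha\},g^2\big]$ and $y<\frac{1}{1-\sqrt2-d}$ (such points exist in the lower rectangle of \eqref{e:Omegag}) lies outside $X_\alpha\supset\Omega_\alpha$ even though $x>1-\alpha\ge\alpha-1$ is outside your ``overlap''. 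The paper's argument runs in the opposite order: it first uses $X_\alpha$ together with \eqref{e:Omegag} to exhibit explicit rectangles inside $\Omega_g\setminus\Omega_\alpha$, and only then invokes Lemma~\ref{l:alphabeta4}, which for such points forces one of the \emph{other three} elements of $S(x,y)$ into $\Omega_\alpha$; the constraint $\Omega_\alpha\subset[\alpha-1,\alpha)\times\mathbb{R}$, plus one more application of $X_\alpha$ to exclude the candidate $(x+1,\frac{y}{1-y})$ when $x\le\alpha-1$, pins this element down to $(-x,-y)$ for $|x|<1-\alpha$ and to $(1-x,\frac{-y}{y+1})$ for $x>1-\alpha$. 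The map $(1-x,\frac{-y}{y+1})$, which your seed step never uses, is exactly what yields the portion of $Y_\alpha$ with $x\in[g^2,\alpha)$ and, for $d\ge4$, the lower band of $y$-values; the endpoints $\frac{1}{d+\sqrt2-1-b}$ and $\frac{1}{d+\sqrt2-2-b}$ arise as images of the ordinates $\frac{1}{2-\sqrt2-d}$, $\frac{1}{1-\sqrt2-d}$ of $X_\alpha$ under $y\mapsto-y$ or $y\mapsto\frac{-y}{y+1}$, not by applying further digits of $\mathcal{T}_\alpha$.

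Your substitute mechanism is also not sound as stated. From $\widetilde Y_\alpha\subset\mathcal{T}_\alpha(\widetilde Y_\alpha)\cup Z$ with $Z\subset\Omega_\alpha$ and $\mathcal{T}_\alpha(\Omega_\alpha)\subset\Omega_\alpha$ you only get $\widetilde Y_\alpha\subset\mathcal{T}_\alpha^n(\widetilde Y_\alpha)\cup\Omega_\alpha$ for every $n$; a point admitting an infinite backward orbit inside $\widetilde Y_\alpha$ that never enters $Z$ (a periodic point, say) is never captured, so the conclusion $\widetilde Y_\alpha\subset\Omega_\alpha$ requires an additional argument that you do not supply. The paper needs neither this scheme nor any forward iteration with chosen digits: $Y_\alpha$ is obtained in one step by transporting the explicitly excluded part of $\Omega_g$ through the correct elements of $S$, followed by the case distinction $b\in\{0,1\}$, i.e.\ whether $T_\alpha(\alpha-1)<1-\alpha$.
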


\begin{proof}
Let $(x,y) \in \Omega_g \setminus \Omega_\alpha$ with $y<0$. 
Then Lemma~\ref{l:alphabeta4} gives that $(-x,-y) \in \Omega_\alpha$ or $(x+1,\frac{y}{1-y}) \in \Omega_\alpha$ or $(1-x,\frac{-y}{y+1}) \in \Omega_\alpha$.
We have thus $(-x,-y) \in \Omega_\alpha$ when $|x|<1-\alpha$,  $(1-x,\frac{-y}{y+1}) \in \Omega_\alpha$ when $x>1-\alpha$.
If $x\le\alpha-1$ and $y<\frac{1}{2-\sqrt{2}-d}$, then we also have $(-x,-y) \in \Omega_\alpha$ because $x+1 \ge g$ and $\frac{y}{1-y} < \frac{1}{1-\sqrt{2}-d}$ imply that $(x+1,\frac{y}{1-y}) \notin \Omega_\alpha$ by Lemma~\ref{l:Xalpha}.

From Lemma~\ref{l:Xalpha} and equation~\eqref{e:Omegag}, we get that 
\[
\big([{-}g^2,g] \times \big[1-\sqrt{2},\tfrac{1}{2-\sqrt{2}-d}\big) \cup (T_\alpha(\alpha-1),g] \times \big[\tfrac{1}{2-\sqrt{2}-d},\tfrac{1}{1-\sqrt{2}-d}\big)\big) \setminus (g^2,g] \times \big[1-\sqrt{2},\tfrac{1}{\sqrt{2}}-1\big) \subset \Omega_g \setminus \Omega_\alpha.
\]
Considering points $(x,y)$ with $x<1-\alpha$ in this union of rectangles, we obtain that
\[
\big(\alpha-1,g^2\big] \times \big(\tfrac{1}{d+\sqrt{2}-2},\sqrt{2}-1\big] \cup \big(\alpha-1,\max\{\alpha-1,-T_\alpha(\alpha-1)\}\big) \times \big(\tfrac{1}{d+\sqrt{2}-1},\tfrac{1}{d+\sqrt{2}-2}\big] \subset \Omega_\alpha.
\]
If $d \ge 4$, then points $(x,y)$ with $x>1-\alpha$ and $y \ge \frac{1}{\sqrt{2}}-1$ provide that
\[
\big[g^2,\alpha\big) \times \big(\tfrac{1}{d+\sqrt{2}-3},\sqrt{2}-1\big] \cup \big[g^2,\min\{\alpha,1-T_\alpha(\alpha-1)\}\big) \times \big(\tfrac{1}{d+\sqrt{2}-2},\tfrac{1}{d+\sqrt{2}-3}\big] \subset \Omega_\alpha.
\]
By distinguishing the cases $T_\alpha(\alpha-1) < 1-\alpha$, i.e., $b=0$, and $T_\alpha(\alpha-1) \ge 1-\alpha$, i.e., $b=1$, we get that $Y_\alpha \subset \Omega_\alpha$. 
(Note that $\Omega_\alpha$ is a closed set.) 
\end{proof}

Since for $\alpha \in (g,1)$ we have $d \ge 3$, with $b=0$ if $d=3$, Lemma~\ref{l:rectangles} shows in particular that
\begin{equation} \label{e:rectangle}
\big[\alpha-1, \min\{\alpha, \tfrac{1}{1-\alpha}-3\}\big] \times \big[1-\tfrac{1}{\sqrt{2}}, \sqrt{2}-1\big] \subset \Omega_\alpha
\end{equation}
(with $\frac{1}{1-\alpha}-3>\alpha-1$).
Theorem~\ref{t:natext} is a direct consequence of this inclusion.

\section{Entropy}

From \eqref{e:rectangle}, we obtain the following proposition. 

\begin{prop} \label{p:bound}
There exists a positive constant $C_3$ such that 
\[
C_3^{-1} \lambda(A) < \mu_\alpha(A) < C_3 \lambda(A)
\]
for any measurable set $A \subset [\alpha-1, \alpha)$. 
\end{prop}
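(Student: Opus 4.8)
The proposition asserts a two-sided comparison between the invariant probability measure $\mu_\alpha$ and Lebesgue measure $\lambda$ on $[\alpha-1,\alpha)$. Recall that by Theorem~\ref{t:ergodic}, $\mu_\alpha$ is equivalent to $\lambda$, and its density is given by the explicit formula
\[
\frac{\mathrm{d}\mu_\alpha}{\mathrm{d}\lambda}(x) = \frac{1}{\hat{\mu}(\Omega_\alpha)} \int_{y :\, (x,y) \in \Omega_\alpha} \frac{1}{(1+xy)^2}\,\mathrm{d}y.
\]
So it suffices to show that this density is bounded above and below by positive constants. Boundedness above is essentially automatic: since $\Omega_\alpha \subset X_\alpha$ by Lemma~\ref{l:Xalpha}, the fiber $\{y : (x,y)\in\Omega_\alpha\}$ has length bounded by the height of $X_\alpha$ (which is at most $\sqrt{2}-(1-\sqrt{2}) = 2\sqrt{2}-1$), and the integrand $\frac{1}{(1+xy)^2}$ is bounded on $X_\alpha$ since $1+xy$ stays bounded away from $0$ there (for $x\in[\alpha-1,\alpha)\subset(-1,1)$ and $y$ in a bounded interval, $|xy|$ is bounded away from $1$ — this needs the quick check that the corners of $X_\alpha$ give $1+xy>0$, which follows from $\Omega_\alpha$ carrying a finite invariant measure $\hat\mu$). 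Also $\hat\mu(\Omega_\alpha)>0$ by Theorem~\ref{t:natext}.

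For the lower bound, the key input is~\eqref{e:rectangle}, which exhibits an honest rectangle
\[
R_\alpha := \big[\alpha-1,\min\{\alpha,\tfrac{1}{1-\alpha}-3\}\big]\times\big[1-\tfrac{1}{\sqrt2},\sqrt2-1\big]\subset\Omega_\alpha,
\]
with $\frac{1}{1-\alpha}-3>\alpha-1$, so the rectangle is nondegenerate. For every $x$ in the (nonempty, open) interval $I_\alpha := (\alpha-1,\min\{\alpha,\frac{1}{1-\alpha}-3\})$ the fiber over $x$ contains the interval $[1-\frac{1}{\sqrt2},\sqrt2-1]$ of fixed positive length, on which $\frac{1}{(1+xy)^2}$ is bounded below by a positive constant. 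Hence the density is bounded below by a positive constant on $I_\alpha$. To extend the lower bound to \emph{all} of $[\alpha-1,\alpha)$, I would invoke ergodicity and the $T_\alpha$-invariance of $\mu_\alpha$: any point of $[\alpha-1,\alpha)$ lies, after finitely many steps of a full-branch inverse, inside $I_\alpha$, and the density transforms under the inverse branches $\psi_{a_1,\dots,a_n}$ with a bounded distortion factor controlled by Proposition~\ref{bdd-dis}. More directly, since $\mu_\alpha$ is an invariant probability measure, for any interval $J=\langle a_1,\dots,a_n\rangle$ that is a full cylinder we have $\mu_\alpha(J)\ge\mu_\alpha(\psi_{a_1,\dots,a_n}(I_\alpha))$, and Proposition~\ref{bdd-dis} bounds this below by $c\,\lambda(J)$ for a uniform $c>0$; since full cylinders generate the Borel algebra (Proposition~\ref{p:full}) and can be taken arbitrarily small (Lemma~\ref{size-lemma}), this forces $\mu_\alpha(A)\ge c\,\lambda(A)$ for all measurable $A$.

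A cleaner route that avoids the covering argument: the density $h_\alpha(x)=\int_{\{y:(x,y)\in\Omega_\alpha\}}\frac{\mathrm{d}y}{(1+xy)^2}$ satisfies the transfer-operator fixed-point equation $h_\alpha(x)=\sum_{a}|\psi_a'(x)|\,h_\alpha(\psi_a(x))$ summed over the digits $a$ admissible at the relevant point, because $\mathcal T_\alpha$ preserves $\frac{\mathrm dx\,\mathrm dy}{(1+xy)^2}$ and $\Omega_\alpha$. Iterating this equation along a full-branch itinerary that lands in $I_\alpha$ expresses $h_\alpha(x)$ as a sum including at least one term $|(\psi_{a_1,\dots,a_n})'(x)|\,h_\alpha(\text{point in }I_\alpha)$, which by Proposition~\ref{bdd-dis} and the lower bound on $I_\alpha$ is $\ge c>0$. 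I expect the main obstacle to be purely bookkeeping: making uniform in $\alpin(g,1)$ the constants coming from (i) the positive lower bound of $\frac{1}{(1+xy)^2}$ on $R_\alpha$ and the finite upper bound on $X_\alpha$, and (ii) the distortion bounds $\frac1{9q_n^2}<|\psi_{a_1,\dots,a_n}'|<\frac1{g^4q_n^2}$ from Proposition~\ref{bdd-dis}, together with $\hat\mu(\Omega_\alpha)$ being bounded away from $0$ and $\infty$ uniformly (the latter again from $R_\alpha\subset\Omega_\alpha\subset X_\alpha$). None of these steps is deep; the content is entirely in the already-established inclusions $R_\alpha\subset\Omega_\alpha\subset X_\alpha$.
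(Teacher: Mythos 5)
Your proposal is correct and matches the paper's proof in all essentials: both rest on the rectangle \eqref{e:rectangle}, on Proposition~\ref{p:full} supplying a full cylinder inside its base interval, and on the derivative bounds of Proposition~\ref{bdd-dis}; the paper simply pushes the set (full cylinder)$\times[1-\frac{1}{\sqrt{2}},\sqrt{2}-1]$ forward under $\mathcal{T}_\alpha^n$ to obtain a full-width strip $[\alpha-1,\alpha)\times[y_0,y_0+\eta]\subset\Omega_\alpha$, which is the planar counterpart of your transfer-operator pull-back of the density. One caveat: your first sketch of the lower bound (bounding $\mu_\alpha(\psi_{a_1,\dots,a_n}(I_\alpha))$ below by $c\,\lambda(J)$) is circular as written, since it presupposes the density bound on $J$ rather than on $I_\alpha$; but the ``cleaner route'' you substitute for it is sound, and the uniformity in $\alpha$ you worry about at the end is not needed, as the proposition concerns a single fixed $\alpha$.
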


\begin{proof}  
By Proposition~\ref{p:full}, we have a full cylinder $\langle a_{1}(x), \ldots , a_{n}(x)\rangle \subset \big[\alpha-1, \min\{1-\alpha, \tfrac{1}{1-\alpha}-3\}\big]$. 
Then there exists a real number~$y_{0}$ and a positive number~$\eta$ such that 
\[
\mathcal{T}_{\alpha}^{n}\big(\langle a_{1}(x),\ldots,a_{n}(x)\rangle \times \big[1-\tfrac{1}{\sqrt{2}}, \sqrt{2}-1\big] \big)= [\alpha -1, \alpha) 
\times [y_{0}, y_{0} + \eta]. 
\]
This shows that there is a positive constant $C_3'$ such that $\xi(x) > C_3'$, where 
\[
\xi(x) = \frac{1}{\hat{\mu}(\Omega_{\alpha})} \int_{y :\, (x, y) \in \Omega_{\alpha}} 
\frac{1}{(1 + xy)^{2}}\, \mathrm{d}y
\]
is the density of $\mu_\alpha$.
On the other hand, since $\Omega_{\alpha} \subset [\alpha-1, \alpha] \times [1- \sqrt{2}, \sqrt{2}]$, we can find $C_3''$ such that $\xi(x) < C_3''$. 
Altogether, we have the assertion of this proposition. 
\end{proof}

Let $h(T_{\alpha})$ denote the entropy of~$T_{\alpha}$ with respect to the invariant measure~$\mu_\alpha$. 
The following shows that Rokhlin's formula holds, as mentioned at the end of~\S3. 

\begin{prop} \label{p:rokhlin}
For any $0 < \alpha \le 1$, we have 
\[
h(T_{\alpha}) = - \int_{[\alpha -1, \alpha)} \log x^{2}\, \mathrm{d}\mu_\alpha(x)  
\]
and 
\[
h(T_{\alpha}) = 2 \lim_{n \to \infty} \frac{1}{n} \log |q_{n}(x)| \quad \mbox{for a.e.}\ x \in [\alpha-1, \alpha).
\]
\end{prop}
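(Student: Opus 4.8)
The plan is to derive both identities from the Shannon--McMillan--Breiman--Chung theorem applied to the rank-one cylinder partition $\mathcal{P} = \{\langle a\rangle : a \text{ admissible}\}$, writing $P_n(x) = \langle a_1(x), \ldots, a_n(x)\rangle$. First I would check that $\mathcal{P}$ is legitimate input: by Lemma~\ref{size-lemma} the diameters of the rank-$n$ cylinders tend to $0$, so $\bigvee_{n\ge0} T_\alpha^{-n}\mathcal{P}$ is the Borel $\sigma$-algebra and hence $h(T_\alpha, \mathcal{P}) = h(T_\alpha)$ by Kolmogorov--Sinai; moreover $\lambda(\langle a\rangle) = O(a^{-2})$ and $\mu_\alpha \asymp \lambda$ by Proposition~\ref{p:bound}, so $\mu_\alpha(\langle a\rangle) = O(a^{-2})$ and $H(\mathcal{P}) = \sum_a \mu_\alpha(\langle a\rangle)\log\frac{1}{\mu_\alpha(\langle a\rangle)} \le C\sum_a \frac{\log a}{a^2} < \infty$. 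Then the SMB--Chung theorem (\cite{Br, Ch}), combined with $\mu_\alpha \asymp \lambda$, gives
\[
-\tfrac{1}{n}\log\lambda\big(P_n(x)\big) \longrightarrow h(T_\alpha) \qquad \text{for a.e. } x .
\]

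Next I would convert $\lambda(P_n(x))$ into an expression in $q_n(x)$ along a well-chosen subsequence. The mean value theorem applied to the inverse branch $\psi_{a_1(x),\ldots,a_n(x)}$ of $T_\alpha^n$ on $P_n(x)$, together with Proposition~\ref{bdd-dis}, gives
\[
\tfrac{1}{9}\,q_n(x)^{-2}\,\lambda\big(T_\alpha^n P_n(x)\big) < \lambda\big(P_n(x)\big) < g^{-4}\,q_n(x)^{-2}\,\lambda\big(T_\alpha^n P_n(x)\big) .
\]
By Proposition~\ref{p:full}, for a.e.\ $x$ there is a sequence $n_1 < n_2 < \cdots$ (depending on $x$) of \emph{full times}, meaning $T_\alpha^{n_k} P_{n_k}(x) = [\alpha-1,\alpha)$, an interval of length $1$; along this subsequence the bounds above read $\frac19 q_{n_k}(x)^{-2} < \lambda(P_{n_k}(x)) < g^{-4} q_{n_k}(x)^{-2}$, so that
\[
\tfrac{2}{n_k}\log|q_{n_k}(x)| = -\tfrac{1}{n_k}\log\lambda\big(P_{n_k}(x)\big) + O(1/n_k) \longrightarrow h(T_\alpha) .
\]

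Independently, I would evaluate the Birkhoff average of $-\log x^2$. Since $|T_\alpha'| = 1/x^2$, the chain rule gives $\sum_{j=0}^{n-1}\big({-}\log (T_\alpha^j x)^2\big) = \log|(T_\alpha^n)'(x)|$, and because $\psi_{a_1(x),\ldots,a_n(x)}$ is the inverse branch, $|(T_\alpha^n)'(x)| = (q_{n-1}(x)\,T_\alpha^n(x) + q_n(x))^2$. Factoring $q_{n-1}T_\alpha^n(x) + q_n = q_n\big(1 + \tfrac{q_{n-1}}{q_n}T_\alpha^n(x)\big)$ and using that the proof of Proposition~\ref{bdd-dis} confines $1 + \tfrac{q_{n-1}}{q_n}T_\alpha^n(x)$ to a fixed compact subinterval of $(0,\infty)$, this equals $2\log|q_n(x)| + O(1)$. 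Since $\mu_\alpha \asymp \lambda$ and $\log x^2$ has only an integrable logarithmic singularity at $0$, we have $\log x^2 \in L^1(\mu_\alpha)$, so Birkhoff's theorem (using the ergodicity from Theorem~\ref{t:ergodic}) yields $\tfrac{2}{n}\log|q_n(x)| \to -\int \log x^2\,\mathrm{d}\mu_\alpha$ for a.e.\ $x$. As the full sequence $\tfrac{2}{n}\log|q_n(x)|$ converges, its limit must coincide with the subsequential limit $h(T_\alpha)$ found above, which gives both $h(T_\alpha) = -\int \log x^2\,\mathrm{d}\mu_\alpha$ and $h(T_\alpha) = 2\lim_n \tfrac1n\log|q_n(x)|$ a.e.

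The main obstacle, I expect, is not a single hard step but the bookkeeping: verifying $H(\mathcal{P}) < \infty$, and above all controlling the $O(1)$ error in $\sum_{j<n}\big({-}\log(T_\alpha^j x)^2\big) = 2\log|q_n(x)| + O(1)$ uniformly, which is precisely where Lemma~\ref{q-n-2} and the bounds on $1 + y\,q_{n-1}/q_n$ from Proposition~\ref{bdd-dis} enter, and where one must keep absolute values throughout since the $q_n$ may be negative for $T_\alpha$ of Tanaka--Ito type. For $\alpha \le g$ one substitutes the explicit natural extension of \cite{T-I} for Proposition~\ref{p:bound}, and the range $\alpha < \tfrac12$ follows from the symmetry of $T_\alpha$ with respect to $\tfrac12$.
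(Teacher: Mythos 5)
Your proof is correct and follows essentially the same route as the paper: the Shannon--McMillan--Breiman--Chung theorem together with Proposition~\ref{p:bound} for the decay of cylinder measures, the full-cylinder subsequence from Proposition~\ref{p:full} combined with Proposition~\ref{bdd-dis} to identify that decay with $\frac{2}{n_k}\log|q_{n_k}(x)|$, and Birkhoff's theorem with the chain rule to equate $\frac{2}{n}\log|q_n(x)|$ with $-\int\log x^2\,\mathrm{d}\mu_\alpha$. The only differences are cosmetic --- you assemble the identifications through the $q_n$'s rather than through $\psi_{a_1,\ldots,a_n}'$ evaluated at $T_\alpha^{n}(x)$, and you make explicit the checks ($H(\mathcal{P})<\infty$, integrability of $\log x^2$) that the paper leaves implicit.
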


\begin{proof}
Choose a generic point $x \in [\alpha-1, \alpha)$ so that 
\begin{itemize}
\item
there exists subsequence of natural numbers $(n_{k})_{k\ge 1}$ such that $\langle a_{1}(x), \ldots , a_{n_{k}}(x)\rangle$ is a full cylinder for any $k \ge 1$,
\item
$- \lim_{n \to \infty} \frac{1}{n}\log \mu_\alpha (\langle a_{1}(x), \ldots , a_{n}(x)\rangle) = h(T_{\alpha})$, \item
$-\lim_{N \to \infty} \sum_{n=0}^{N} \log (T_{\alpha}x)^{2} = -\int_{[\alpha -1, \alpha)} \log x^{2}\, \mathrm{d}\mu_\alpha$.  
\end{itemize}
For each $n_{k}$, we see that
\begin{equation} \label{lambda} 
\lambda (\langle a_{1}(x), \ldots , a_{n_{k}}(x)\rangle)
= 
\left| \frac{p_{n-1} \cdot \alpha + p_{n}}
       {q_{n-1} \cdot \alpha + q_{n}} 
       - 
 \frac{p_{n-1} \cdot (\alpha-1) + p_{n}}
       {q_{n-1} \cdot (\alpha-1) + q_{n}}       
\right| .      
\end{equation}
From Proposition~\ref{p:bound}, we have 
\[
\lim_{k \to \infty} \frac{1}{n_{k}} 
\log \mu_\alpha(\langle a_{1}(x), \ldots , a_{n_{k}}(x)\rangle)
= 
\lim_{k \to \infty} \frac{1}{n_{k}} 
\log \lambda(\langle a_{1}(x), \ldots , a_{n_{k}}(x)\rangle).
\]
Then by the mean-value theorem and \eqref{lambda} there 
exists $y_{0} \in [\alpha-1, \alpha)$ such that  
\[
h(T_{\alpha}) = -\lim_{k \to \infty} 
\frac{1}{n_{k}} \log \left| \psi_{a_{1}(x) \cdots a_{n_{k}}(x)}' (y_{0})\right| . 
\]
From Proposition \ref{bdd-dis}, we see 
\[
h(T_{\alpha}) = -\lim_{k \to \infty} 
\frac{1}{n_{k}} \log \left| \psi_{a_{1}(x) \cdots a_{n_{k}}(x)}' (y)\right|  
\]
for any $y \in [\alpha-1, \alpha)$.  So we can choose 
$y = T_{\alpha}(x)$.  
Then 
\[
\psi_{a_{1}(x) \cdots a_{n_{k}}(x)}' (y) 
= \frac{1}{\left(T_{\alpha}^{n_{k}}\right)'(x)}
\] 
holds.  
Consequently by the choice of $x$ and the chain rule we have the first assertion of this proposition. 
The second assertion also follows from Proposition~\ref{bdd-dis}. 
\end{proof}

Finally, we establish the monotonicity of the product $h(T_\alpha) \hat{\mu}(\Omega_\alpha)$.

\begin{proof}[Proof fo Theorem~\ref{t:hmu}]
For each $\alpha \in [1/2,g]$, we have $h(T_\alpha) = \frac{\pi^2}{6}$ and 
$\hat{\mu}(\Omega_\alpha) = -2 \log g$. 
Let now $g \le \alpha < \beta \le 1$, $d = -a_1(\alpha-1)$, $b = \lfloor T_\alpha(\alpha-1) + \alpha\rfloor$.
Set
\[
X_{\alpha,\beta} = \begin{cases}
\big(\max\{1{-}\beta,\tfrac{1}{\beta-1}+d+1\},\alpha\big) \times \big[\tfrac{1}{1-\sqrt{2}-d},\tfrac{1}{-\sqrt{2}-d}\big] \cap \Omega_\alpha & \mbox{if}\ T_\alpha(\alpha{-}1) = \alpha{-}1, \\ \big(\max\{\alpha{-}1,\tfrac{1}{\beta-1}+d\},T_\alpha(\alpha{-}1)\big] \times \big[\tfrac{1}{2-\sqrt{2}-d},\tfrac{1}{1-\sqrt{2}-d}\big] \cap \Omega_\alpha & \mbox{if}\ \alpha{-}1 {\,<\,} T_\alpha(\alpha{-}1) {\,\le\,} 1{-}\beta, \\ 
\big(\max\{1{-}\beta,\tfrac{1}{\beta-1}+d\},T_\alpha(\alpha{-}1)\big] \times \big[\tfrac{1}{2-\sqrt{2}-d},\tfrac{1}{1-\sqrt{2}-d}\big] \cap \Omega_\alpha & \mbox{if}\ T_\alpha(\alpha{-}1) > 1{-}\beta.
\end{cases}
\]
Note that $X_{\alpha,\beta} \subset X_\alpha \setminus X_\beta$, and we have $\hat{\mu}(X_{\alpha,\beta})>0$ because of \eqref{e:rectangle} together with $\mathcal{T}_\alpha(\Omega_\alpha) \subset \Omega_\alpha$,
\[
\mathcal{T}_\alpha([\alpha-1,x] \times [1-\sqrt{2},2-\sqrt{2}]) = [T_\alpha(x),T_\alpha(\alpha-1)] \times [\tfrac{1}{2-\sqrt{2}-d},\tfrac{1}{1-\sqrt{2}-d}] \ \mbox{for all}\ x \in (\alpha-1,\tfrac{1}{\alpha-d-1}],
\]
and, in case $T_\alpha(\alpha-1) = \alpha-1$,
\[
\mathcal{T}_\alpha((\alpha-1,x] \times [1-\sqrt{2},2-\sqrt{2}]) = [T_\alpha(x),T_\alpha(\alpha-1)) \times [\tfrac{1}{1-\sqrt{2}-d},\tfrac{1}{-\sqrt{2}-d}] \ \mbox{for all}\ x \in (\alpha-1,\tfrac{1}{\alpha-d-2}].
\]
Let 
\[
\varphi(x,y) = \begin{cases}(-x,-y) & \mbox{if}\ T_\alpha(\alpha-1) \in (1-\alpha,1-\beta], \\ (1-x,\frac{-y}{y+1}) & \mbox{otherwise.}\end{cases}
\]
Then we have $\hat{\mu}(\varphi(X_{\alpha,\beta})) = \hat{\mu}(X_{\alpha,\beta})$ and, by Lemma~\ref{l:rectangles}, $\varphi(X_{\alpha,\beta}) \subset \Omega_\beta$. 
Let $\widetilde{\mathcal{T}}_\alpha$ be the first return map of $\mathcal{T}_\alpha$ on~$X_{\alpha,\beta}$, and let $\widetilde{\mathcal{T}}_\beta$ be the first return map of $\mathcal{T}_\beta$ on~$\varphi(X_{\alpha,\beta})$.
For $(x,y) \in X_{\alpha,\beta}$, we have, by Lemma~\ref{l:alphabeta3}, $\mathcal{T}_\beta^k\varphi(x,y) \in S\widetilde{\mathcal{T}}_\alpha(x,y)$ for some $k \ge 1$, thus $\mathcal{T}_\beta^k\varphi(x,y) = \varphi\widetilde{\mathcal{T}}_\alpha(x,y)$, hence $\varphi\widetilde{\mathcal{T}}_\alpha(x,y) = \widetilde{\mathcal{T}}_\beta^m\varphi(x,y)$ for some $m \ge 1$.
This implies that $h(\widetilde{\mathcal{T}}_\beta) \le h(\widetilde{\mathcal{T}}_\alpha)$. 
Abramov's formula gives that
\[
h(\widetilde{\mathcal{T}}_\alpha) = \frac{\hat{\mu}
(\Omega_\alpha)}{\hat{\mu}(X_{\alpha,\beta})} h(\mathcal{T}_\alpha) \quad \mbox{and} \quad h(\widetilde{\mathcal{T}}_\beta) = \frac{\hat{\mu}(\Omega_\beta)}{\hat{\mu}(\varphi(X_{\alpha,\beta}))} h(\mathcal{T}_\beta),
\]
thus $\hat{\mu}(\Omega_\beta)\, h(\mathcal{T}_\beta) \le \hat{\mu}(\Omega_\alpha)\, h(\mathcal{T}_\alpha)$.
\end{proof}

\end{document}